\font\cyr=wncyi8
\def\@evenfoot{\rule{0pt}{20pt}[\today] \hfill [{\tt \jobname.tex}]}
\def\@oddfoot{\rule{0pt}{20pt}{[\tt \jobname.tex}]\hfill [\today]}
\newtheorem{theorem}{Theorem}[section]
\newtheorem{corollary}[theorem]{Corollary}
\newtheorem{lemma}[theorem]{Lemma}
\newtheorem*{problem}{Problem}
\theoremstyle{definition}
\newtheorem{notation}[theorem]{Notation}
\newtheorem{definition}[theorem]{Definition}
\newtheorem{example}[theorem]{Example}
\newtheorem{nonexample}[theorem]{Non-example}
\newtheorem{remark}[theorem]{Remark}
\newtheorem*{acknowledgment}{Acknowledgment}
\newcommand\hksqrt[1]{
\setbox0=\hbox{$\sqrt{#1\,}$}
\dimen0=\ht0
\advance\dimen0-0.4\ht0
\setbox2=\hbox{\vrule height\ht0 depth-\dimen0}
{\box0\lower0.4pt\box2}
}
\def\bfk{{\mathbb k}}
\def\op{{\rm op}}
\def\bbk{{\mathbb k}}
\def\oP{{\EuScript{P}}}
\def\oA{{\EuScript{A}}}
\def\cald{{\mathcal D}}
\def\calf{{\mathcal F}}
\def\calP{\oP}
\def\ttC{{\tt C}}
\def\prez#1#2{{\langle\, #1;#2 \,\rangle}}
\def\dl#1#2{#1(#2)\hbox{\;$\rightsquigarrow$\;}#2(#1)}
\def\Com{\mbox{{$\mathcal C$}\hskip -.3mm {\it om}}}
\def\Lie{\hbox{{$\mathcal L$}{\it ie\/}}}
\def\Mag{\hbox{{$\EuScript M$}{\it ag\/}}}
\def\Lei{\hbox{{$\mathcal L$}{\it ei\/}}}
\def\Ass{\mathcal A{\it ss}}
\def\uAss{\underline{\hbox{$\mathcal A \hskip -.0em ss$}}}
\def\BB{\mathcal{BB}}
\def\uBB{\underline{\hbox{$\mathcal{BB} \hskip -.0em$}}}
\def\b{{\bullet}}
\def\c{{\circ}}
\colorlet{RED}{red}
\def\l@section{\@tocline{1}{0pt}{0pc}{}{}}
\def\l@subsection{\@tocline{2}{0pt}{1pc}{4.6em}{}}
\def\l@subsubsection{\@tocline{3}{0pt}{1pc}{7.6em}{}}
\renewcommand{\tocsection}[3]
{\indentlabel{\@ifnotempty{#2}{\makebox[1.25em][l]{\hfil\ignorespaces#1#2.}}}#3}
\renewcommand{\tocsubsection}[3]
{\indentlabel{\@ifnotempty{#2}{\hspace*{1.25em}\makebox[2.5em][l]{\hfil\ignorespaces#1#2.}}}#3}
\renewcommand{\tocsubsubsection}[3]
{\indentlabel{\@ifnotempty{#2}{\hspace*{3.25em}\makebox[2.75em][l]{\ignorespaces#1#2.}}}#3}
\title[Distributive laws between the Three Graces]
{%
Distributive laws between the Three Graces
}
\author{Murray Bremner}
\address{Department of Mathematics and Statistics, University of Saskatchewan, Canada}
\email{bremner@math.usask.ca}
\author{Martin Markl}
\address{Institute of Mathematics of the Czech Academy of Sciences, Prague, Czech Republic}
\email{markl@math.cas.cz}
\dedicatory{All algebras are equal, but some algebras are more equal than others.}
\thanks{Murray Bremner was supported by the Discovery Grant \emph{Algebraic Operads} from NSERC,
the Natural Sciences and Engineering Research Council of Canada.
Martin Markl was supported by the Eduard \v Cech Institute
P201/12/G028, grant  GA \v CR 18-07776S and RVO: 67985840.}
\begin{document}
\bibliographystyle{plain}

\begin{abstract}
By the Three Graces we refer, following J.-L. Loday, to the algebraic operads
$\Ass$, $\Com$, and $\Lie$, each generated by a single binary operation; algebras
over these operads are respectively associative, commutative
associative, and Lie.
We classify all distributive laws (in the categorical sense 
of Beck) between these three operads. 
Some of our results depend on the computer algebra system Maple, 
especially its packages \texttt{LinearAlgebra} and \texttt{Groebner}.
\end{abstract}
 
\subjclass[2010]{
Primary
18D50. 
Secondary
13P10, %
16R10, 
16S10, 
16S37, 
16W10, 
17B60, 
17B63, 
18-04, 
68W30. 
}

\keywords{Algebraic operads, distributive laws, Koszul duality, 
associative algebras, commutative associative algebras, Lie algebras, Poisson algebras, 
linear algebra over polynomial rings, Gr\"obner bases for polynomial ideals, computer algebra}

\maketitle

\setcounter{tocdepth}{1}
\small \tableofcontents


\section{Introduction}

As the epigraph indicates%
\footnote{\,The allusion is to a famous quotation from George Orwell's satire 
\emph{Animal Farm}.}, some algebras are more important than others. 
Experience teaches us that the most common classes of algebras are the Three Graces%
\footnote{\,This terminology originated with J.-L. Loday, referring in particular to 
the famous painting \emph{Les Trois Gr\^aces}, a Renaissance masterpiece 
by Lucas Cranach the Elder.
Since 2011 it has been in the collection of the Mus\'ee du Louvre in Paris.
It depicts the \emph{charites} or daughters of Zeus from classical Greek mythology:
Aglaea (meaning elegance or splendor), Euphrosyne (mirth or happiness), and Thalia 
(youth or beauty).} 
--- associative, commutative associative, and Lie ---
together with other classes of algebras that combine these in a specific way.
The algebras in these three classes are representations of 
the quadratic Koszul operads denoted
$\Ass$, $\Com$, and $\Lie$, or created from these operads using 
quadratic homogeneous distributive laws 
(the precise meaning of this phrase will be explained in \S\ref{sectiondistributivelaws}).   
Examples of structures combining two of these operads are the following:
\begin{itemize}[leftmargin=*]
\item 
Poisson algebras, omnipresent in classical mechanics \cite{BFFLS1,BFFLS2,K2,K3,KM,LL,MR}
\item 
Gerstenhaber algebras \cite{A2,DTT,G,K1,LZ}
\item
Batalin-Vilkovisky algebras \cite{A1,GTV,Getzler,H,R,X}
\item 
$e_n$-algebras and the little cubes operad from homotopy theory \cite{BV,CG,F,May,S1,S2}
\end{itemize}
The motivation for the present article is to investigate whether there are other combinations
of the Three Graces via such a distributive law, beyond the well-known examples.
It turned out that there are, up to isomorphism, only the classical, well-known
distributive laws, plus the trivial and truncated ones.  
Since classifying distributive laws amounts to solving hundreds of
quadratic equations, we found it fascinating that for the Three Graces this huge
system has only a small finite number of solutions. 
This kind of rigidity which the Three Graces possess might be another reason why 
they are more equal than others.
Although the results of this article might not surprise everyone,
we thought that at some point of the history of mankind this analysis
had to be made%
\footnote{
In the context of the present paper we found it interesting that, according to \cite{GV}, 
one of the Three Graces --- the operad $\Lie$ --- has the property that the variety
of its algebras is the only variety of non-associative algebras which is locally algebraically 
cartesian closed.
}.

The existence of this paper was greatly facilitated by advances in computer-assisted mathematics, 
and in particular the computer algebra system Maple; worksheets written by the first author 
expressly for this project were used to extend hand calculations of the second author dating from 
some 20 years ago. 

In Section~\ref{sectiondistributivelaws} we recall Jon Beck's definition of 
distributive laws~\cite{B} along with its operadic translation \cite{FM,M}. 
In the subsequent sections we classify all homogeneous operadic distributive laws 
between the Three Graces.
The last section classifies distributive laws
between associative and magmatic multiplications. 
It points to the fact that, while outside the realm of the Three Graces various
bizarre-looking distributive laws exist, they may turn out to be
isomorphic to the expected ones.
Classifying all possible distributive laws is difficult, but to verify whether 
a given formula induces a distributive law is relatively simple. 
We did so by hand in Sections~\ref{Dnes_jsem_letal_na_205ce.} and~\ref{Pot_ze_mne_leje.}, 
believing it might elucidate the meaning of coherence of distributive
laws. 

Let us close this introduction by formulating

\begin{problem}
Characterize pairs of operads for which there exists only a finite number of
non-isomorphic distributive laws between them.
\end{problem}

Any two of the Three Graces form such a
pair as does, according to Section~\ref{Pot_ze_mne_leje.}, also the
pair of operads for associative and magmatic multiplications.
In a sequel to this paper we intend to perform a similar analysis for bialgebras. 

\begin{acknowledgment} 
We are indebted to Vladimir Dotsenko
for explaining to us that the Eulerian
substitution~(\ref{Po_navratu_z_Moskvy.}) brings one of our bizarre
distributive laws to the standard truncated one.
\end{acknowledgment} 


\section{Distributive laws}
\label{sectiondistributivelaws}


\subsection{Background}

In this section we recall basic facts about distributive laws, closely following the work of
Fox and the second author \cite{FM}; see also the original paper by Beck \cite{B} and the works 
of Street \cite{Street} and Lack \cite{Lack}.
We will assume working knowledge of operads and their various versions.
Suitable references are the
monographs~\cite{BD-book,markl-shnider-stasheff:book,loday-vallette} 
complemented
with~\cite{markl:handbook} and 
the original source~\cite{ginzburg-kapranov:DMJ94}. 
All algebraic objects will be defined over a ground field ${\bbk}$ of characteristic $0$, 
and the basic category will be the monoidal category of $\mathbb{Z}$-graded vector spaces
with the Koszul sign rule.
Loosely speaking, a distributive law relates operations of two types, in the sense that 
it rearranges multiple applications of these operations in such a way that operations of 
the first type are applied first, followed by those of the second type. 
Moreover, this rearrangement must be done in a way that is coherent in the categorical sense. 

\begin{example}
Poisson algebras have two operations: 
the Lie bracket $[a,b]$ and the commutative associative multiplication $a \cdot b$.
These operations are related by the derivation~law:
\begin{equation}
\label{derivationlaw}
[ a \cdot b, c ] = a \cdot [b,c] + [a,c] \cdot b.
\end{equation}
On the left side we see the operation of the second type, namely $a \cdot b$, multiplied by $c$ 
using the operation of the first type, while in each term on the right side we 
first apply the Lie bracket and then the operation of the second type. 
By repeated application of equation \eqref{derivationlaw} regarded as a directed (left to right) rewrite rule,
we may convert any monomial, involving some number of occurrences of the first and 
second operations, into a sum of terms where all of the Lie brackets have been applied first. 
Coherence means that equation \eqref{derivationlaw} does not introduce any `unexpected relations';
to be precise, this means that the free Poisson algebra generated by a vector space $X$ 
is naturally isomorphic \cite[Lemma 1]{Shestakov}
to the free commutative associative algebra on the free Lie algebra generated by $X$; 
symbolically,
\[
\mathbf{Pois}(X) \cong \mathbf{Com}( \mathbf{Lie}(X) ).
\]
Distributive laws are ordered: equation \eqref{derivationlaw} is a distributive law of a Lie multiplication 
over a commutative associative multiplication; we denote this by
\[
\cald : \dl\Lie\Com.
\]
\end{example}

\begin{definition}
\label{defdl}
Let us recall the precise definition introduced by Beck \cite{B}.
Assume that $T_1 = (T_1,\mu_1,\eta_1)$ and  $T_2 = (T_2,\mu_2,\eta_2)$ are monads (formerly called 
triples) on a category~$\ttC$.
A distributive law guarantees that for every $T_2$-algebra $A$ in $\ttC$, the object 
$T_2 (A) \in \ttC$ has the structure of a $T_1$-algebra in a very explicit way. 
More precisely, a \textit{distributive law} 
is a~natural transformation 
\begin{equation}
\label{distributivelaw}
\lambda: T_1T_2 \to T_2 T_1,
\end{equation}
such that, for every $T_2$-algebra $A = (A,\alpha\colon T_2( A) \to A)$, the object $T_2(A)\in \ttC$ 
is a $T_1$-algebra with structure morphism
\[
T_1 T_2 A \xrightarrow{\;\lambda\;} T_2 T_1 A  \xrightarrow{\;T_2\alpha\;} T_2 A.
\] 
This imposes certain conditions on $\lambda$ whose explicit form can be found in \cite{B}; 
see also \cite[\S3]{FM}.
In this situation, the endofunctor $T = T_2 T_1$ is again a monad, with structure 
transformations
\[
\mu = T_2\mu_1 \circ \mu_2 T_1^2 \circ T_2 \lambda T_1,
\qquad
\eta = \eta_1 \circ \eta_2 \circ T_1.
\]
The equality
\[
T(X) = T_2\big(T_1 (X)\big), \qquad X \in \ttC,
\]
may be interpreted as saying that the free $T$-algebra on $X$ is (as an object of $\ttC$) naturally 
isomorphic to the free $T_2$-algebra generated by the free $T_1$-algebra on $X$.
\end{definition}

\begin{example}
\label{exampledl}
We know one example of a distributive law from elementary school. 
If $\ttC$ is the category of sets, $T_1$ the commutative monoid monad, and $T_2$ the abelian group 
monad, then the equation $x(a+b) = xa + xb$ generates a natural transformation $T_1 T_2 \to T_2 T_1$ 
taking a product of sums to a sum of products. 
The algebras for the combined monad $T = T_2 T_1$ are commutative rings.
\end{example}


\subsection{Setting of this article}

We restrict ourselves, for reasons explained below, to monads given by the free
$\oP$-algebra functor for a quadratic finitely generated operad $\oP$. 
Moreover, the distributive laws we consider will be given by very specific data. 
Before we give a precise definition, we need to establish some notational conventions;
we write $\Sigma_n$ for the symmetric group on $n$ letters.

\begin{notation}
If $E$ is a vector space which is also a $\Sigma_2$-module, then $\calf(E)$ denotes the free operad 
generated by $E$ placed in arity $2$. 
For a subspace $R \subseteq \calf(E)(3)$, we write $\prez ER$ for the quotient $\calf(E)/(R)$ of
the free operad $\calf(E)$ modulo the operad ideal $(R)$ generated by $R$.  
\end{notation}

Suppose that the $\Sigma_2$-module $E$ has an invariant decomposition $E = E_1 \oplus E_2$. 
This induces the decomposition
\[
\calf(E)(3) = \calf(E)(3)_{11}\oplus \calf(E)(3)_{12} \oplus \calf(E)(3)_{21} \oplus \calf(E)(3)_{22},
\]
where $\calf(E)(3)_{ij}$ is the $\Sigma_3$-invariant subspace of $\calf(E)(3)$ generated by the 
compositions of the form $\mu(1,\nu)$ and $\mu(\nu,1)$ with $\mu \in E_i$ and $\nu \in E_j$ for 
$i,j = 1,2$. 
Notice that $\calf(E)(3)_{ii}$ can be identified with the image of the map $F(E_i)(3)\to \calf(E)(3)$ 
induced by the inclusion $E_i \subseteq E$.
Let us consider a
$\Sigma_3$-invariant map 
\begin{equation}
\label{sigma3map}
\cald\colon
\calf(E)(3)_{12} \longrightarrow \calf(E)(3)_{21}.
\end{equation}
Every such map defines a $\Sigma_3$-submodule $R_{\cald} \subseteq \calf(E)(3)$ generated by elements 
of the form $x - \cald(x)$ for $x \in \calf(E)(3)_{12}$.

Let $\oP=\prez ER$ be a quadratic operad for which there exists a $\Sigma_2$-module decomposition 
$E = E_1 \oplus E_2$, a $\Sigma_3$-equivariant linear map 
$\cald\colon \calf(E)(3)_{12} \to \calf(E)(3)_{21}$, 
and $\Sigma_3$-invariant subsets $R_i \subseteq \calf(E)(3)_{ii}$, $i=1,2$, 
such that $R = R_1 \oplus R_{\cald} \oplus R_2$. 
In other words, the operad $\oP$ has the presentation
\begin{equation}
\label{presentation}
\oP = \prez{E_1 \oplus E_2}{R_1 \oplus R_{\cald} \oplus R_2}.
\end{equation}
We consider the suboperads $\oP_i = \prez{E_i}{R_i} \subseteq \oP$
for $i = 1, 2$.
For $1 \le s \le l \le n$, and a~sequence $m_1, \dots, m_l \ge 1$ with $m_1 + \cdots + m_l = n$, 
we write $\oP(n)_l$ for the $\Sigma_n$-submodule of $\oP(n)$ generated by the elements of the form 
$\mu( \nu_1, \dots, \nu_l )$ for $\mu\in \oP_2(l)$ and $\nu_s \in \oP_1(m_s)$.
The inclusions $\oP_i \subseteq \oP$ ($i = 1, 2$) induce, for any $n
\ge 2$, an equivariant linear map
\[
\xi(n)\colon \bigoplus_{1 \le l \le n} \oP(n)_l \,\longrightarrow\, \oP(n).
\]

\begin{definition}
\label{defdl2}
We say that the map $\cald$ of equation \eqref{sigma3map} is an (operadic homogeneous quadratic) 
\textit{distributive law} of $\oP_1$ over $\oP_2$ if the map $\xi(n)$ is an isomorphism 
for every $n \ge 2$. 
We express this fact by writing $\cald\colon \dl{\oP_1}{\oP_2}$.
\end{definition}

We denote by $T_i$ ($i = 1, 2$) the free $\oP_i$-operad monad acting on the category of $\Sigma$-modules. 
From \cite[Proposition 2.6]{M} we know that a distributive law in the sense of Definition 
\ref{defdl2} determines, in a very explicit way, a distributive law \eqref{distributivelaw} in the sense of Beck,
namely $\lambda\colon T_1 T_2 \to T_2 T_1$, for which the combined monad $T = T_2T_1$ is the monad
for $\oP$-algebras. 
Of course, not all distributive laws in the sense of Beck are distributive laws in the sense of
Definition \ref{defdl2}: see Example \ref{exampledl}, which is not even `operadic'  since $x$ appears 
twice in the right hand side.   

\begin{remark}
One sometimes says more precisely that the map in~(\ref{sigma3map})
satisfying the condition of Definition~\ref{defdl2} is a {\em
  rewrite rule\/} defining a distributive law between the associated
monads. Rewrite rules are often conveniently expressed in the form of
an equation such as \eqref{derivationlaw} whose left hand side belongs to
$\calf(E)(3)_{12}$ and right hand side to $\calf(E)(3)_{21}$.
\end{remark}

The adjective \emph{quadratic} in Definition \ref{defdl2} means that the distributive law involves
quadratic operads and is therefore determined by its behavior inside $\calf(E)(3)$; from this it follows
that the resulting operad \eqref{presentation} is again quadratic. 
Quadratic operads have their Koszul duals, and therefore we have the following result.

\begin{lemma}
\label{duallemma}
\emph{\cite[Lemma 9.3]{FM}}
In the situation of Definition \ref{defdl2} one has the following canonical dual quadratic homogeneous 
distributive law of $\calP_2^!$ over $\calP_1^!$,
\[
\cald^! \colon \dl{\oP_2^!}{\oP_1^!},
\]
such that the resulting combined operad is the Koszul dual of the operad \eqref{presentation}.
\end{lemma}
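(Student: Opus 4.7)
The plan is to transport the decomposition $E = E_1 \oplus E_2$ through Koszul duality, identify the orthogonal complement of $R_{\cald}$ as the graph of a map in the reverse direction, and then verify that this reversed graph is again coherent in the sense of Definition~\ref{defdl2}.

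First I would recall that, for a binary quadratic operad, $\oP^! = \prez{E^\vee}{R^\perp}$, where $E^\vee$ is the Koszul-dual $\Sigma_2$-module and $R^\perp \subseteq \calf(E^\vee)(3)$ is the annihilator of $R$ under the canonical pairing. The $\Sigma_2$-invariant splitting $E = E_1 \oplus E_2$ dualizes to a $\Sigma_2$-invariant splitting $E^\vee = E_1^\vee \oplus E_2^\vee$, inducing the analogous four-piece decomposition of $\calf(E^\vee)(3)$. Since the canonical pairing is computed vertex by vertex on trees, $\calf(E)(3)_{ij}$ pairs trivially with $\calf(E^\vee)(3)_{kl}$ whenever $(i,j) \ne (k,l)$, so the pairing is block-diagonal and identifies $\calf(E^\vee)(3)_{ij}$ with the linear dual of $\calf(E)(3)_{ij}$.

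Next I would transfer the decomposition of $R$ to one of $R^\perp$. Because $R = R_1 \oplus R_{\cald} \oplus R_2$ lives in the three mutually orthogonal blocks $\calf(E)(3)_{11}$, $\calf(E)(3)_{12} \oplus \calf(E)(3)_{21}$, and $\calf(E)(3)_{22}$, block-diagonality gives
\[
R^\perp = R_1^\perp \oplus R_{\cald}^\perp \oplus R_2^\perp,
\]
where $R_i^\perp \subseteq \calf(E_i^\vee)(3)$ is the usual Koszul-dual relation space, so $\oP_i^! = \prez{E_i^\vee}{R_i^\perp}$. Since $R_{\cald}$ is by construction the graph of $\cald$ inside $\calf(E)(3)_{12} \oplus \calf(E)(3)_{21}$, a direct bilinear computation reveals that $R_{\cald}^\perp$ is the graph of a $\Sigma_3$-equivariant map
\[
\cald^! \colon \calf(E^\vee)(3)_{21} \longrightarrow \calf(E^\vee)(3)_{12},
\]
which, up to Koszul signs, is the transpose of $\cald$ under the mutual pairings of the $(1,2)$- and $(2,1)$-corners. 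Thus $\oP^!$ acquires a presentation of the form~\eqref{presentation} with the roles of the two factors swapped, which is exactly the shape demanded of a candidate distributive law $\cald^! \colon \dl{\oP_2^!}{\oP_1^!}$.

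The final task is to verify that $\cald^!$ is genuinely a distributive law, i.e. that each $\xi^!(n)$ is an isomorphism. My preferred route uses \cite[Proposition 2.6]{M}: the distributive-law axiom for $\cald$ is equivalent to the vanishing of a single $\Sigma_4$-equivariant obstruction in $\calf(E)(4)$, and under the induced pairing on $\calf(E)(4) \otimes \calf(E^\vee)(4)$ this obstruction pairs perfectly with the analogous obstruction for $\cald^!$; vanishing on one side therefore forces vanishing on the other. A Hilbert-series back-up route also works: $\cald$ being a distributive law gives the plethystic identity $f_{\oP}(t) = f_{\oP_2}(f_{\oP_1}(t))$, and combining this with the Koszul identity $f_{\oP^!}(-f_{\oP}(-t)) = t$ (and its analogues for each $\oP_i$) yields $f_{\oP^!} = f_{\oP_1^!} \circ f_{\oP_2^!}$; together with the automatic surjectivity of $\xi^!$ read off from the presentation, this dimension count forces $\xi^!$ to be an isomorphism. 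The main obstacle is precisely this last step: building $\cald^!$ is immediate, but confirming that coherence of $\cald$ transports to coherence of $\cald^!$ requires careful sign bookkeeping to ensure that the Koszul signs in $E^\vee$ and in the tree pairing on $\calf(\cdot)(4)$ do not disrupt the translation of the arity-$4$ coherence pentagon from one side to the other.
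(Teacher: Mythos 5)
The paper offers no proof of this lemma --- it is quoted verbatim from \cite[Lemma 9.3]{FM} --- so your argument has to be judged on its own terms rather than against an in-paper argument. The first half of your proposal is correct and is the standard construction: the splitting $E=E_1\oplus E_2$ dualizes, the four blocks of $\calf(E)(3)$ are mutually orthogonal under the tree-wise pairing, hence $R^\perp=R_1^\perp\oplus R_{\cald}^\perp\oplus R_2^\perp$, and the annihilator of the graph $\{x-\cald(x)\}$ is the graph of (a sign twist of) the adjoint map $\calf(E^\vee)(3)_{21}\to\calf(E^\vee)(3)_{12}$, which gives $\oP^!$ a presentation of the shape \eqref{presentation} with the factors interchanged.

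The gap is exactly where you place it: the transfer of coherence. Your route (b) does not prove the lemma as stated, because the functional equation $f_{\oP^!}(-f_{\oP}(-t))=t$ is available only for Koszul operads; you would need to assume $\oP_1$ and $\oP_2$ Koszul and additionally invoke the non-trivial theorem that a distributive law between Koszul operads produces a Koszul composite, whereas Definition \ref{defdl2} and the lemma carry no Koszulness hypothesis. Your route (a) is the right idea but is not yet an argument: being a distributive law is not the vanishing of a single obstruction element, it is the condition that $\xi(4)$ be an isomorphism (by Theorem \ref{turmo}), equivalently --- since $\xi(4)$ is automatically surjective --- the dimension equality $\dim\oP(4)=\dim(\oP_2\circ\oP_1)(4)$. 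To make your ``perfect pairing'' precise one uses the identity $\bigl(\sum_j U_j\bigr)^\perp=\bigcap_j U_j^\perp$ for the subspaces of $\calf(E)(4)$ spanned by the arity-$4$ consequences of $R$: this exhibits $\oP^!(4)^*$ as the intersection of the consequence subspaces of $R$ itself, so that $\dim\oP^!(4)$ is an explicit function of $\dim\oP(4)$, $\dim E$ and $\dim R$ (and likewise for $\oP_1$, $\oP_2$ and for the composite modules). Only after this bookkeeping does the arity-$4$ equality for $\cald$ become equivalent to the one for $\cald^!$; that computation is the actual content of \cite[Lemma 9.3]{FM} and is the piece missing from your write-up.
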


The adjective \emph{homogeneous} in Definition \ref{defdl2} means that the distributive law preserves 
the bigrading of the free operad $\calf(E_1\oplus E_2)$ given by the number of operations first from 
$E_1$ and then from $E_2$. 
Therefore the resulting combined quadratic operad \eqref{presentation} is also bigraded, and hence 
free $\oP$-algebras are also bigraded. 
As a consequence, the operadic cohomology of $\oP$-algebras can be calculated as the cohomology of 
a bicomplex combining $\oP_1$- and $\oP_2$-cochains; see \cite[Theorem 10.2]{FM}.  

\begin{example}
An `archetypal' distributive law in the sense of Definition \ref{defdl2} is equation~\eqref{derivationlaw} 
which combines Lie and commutative associative algebras into Poisson algebras.
A particular \emph{inhomogeneous} quadratic operadic distributive law 
is that which describes associative algebras as algebras with two 
operations, a commutative nonassociative multiplication $-\cdot-$ and a Lie bracket $[-,-]$, with 
the relations
\[
[x,y\cdot z] = [x,y]\cdot z +y\cdot[x,z], 
\qquad\qquad
[y,[x,z]] = (x\cdot y)\cdot z-x\cdot (y\cdot  z).
\]
This law is indeed not homogeneous, 
since on the left side of the second equation we see a~term of bidegree $(0,2)$, 
i.e., with no instance of the multiplication $-\cdot-$ but two instances of $[-,-]$, 
while the terms on the right hand side are of bidegree $(2,0)$.
\end{example}

In general, defining a transformation $\lambda$ as in equation
\eqref{distributivelaw}, 
and verifying that it is indeed a~distributive law, is a difficult problem, 
\emph{operadic} distributive laws are determined by a very small set of data of 
essentially finitary nature.
Moreover, verifying the required property (Definition \ref{defdl2}) 
boils down to a finite calculation.

\begin{theorem}
\emph{\cite[Theorem 2.3]{M}}
\label{turmo}
The map $\xi(n)$ is an isomorphism for all $n \ge 2$ if and only if it is an isomorphism for $n = 4$.
\end{theorem}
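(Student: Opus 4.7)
The forward implication is immediate, since $n = 4$ is among the relevant values. For the converse, assume $\xi(4)$ is an isomorphism; the plan is to present $\oP$ via a terminating, confluent rewriting system on $\calf(E_1 \oplus E_2)$ whose critical pairs all live at arity $4$, so that Newman's diamond lemma propagates the hypothesis to every $n$.

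For surjectivity of $\xi(n)$ I would orient the distributive law so that each $x \in \calf(E)(3)_{12}$ rewrites to $\cald(x) \in \calf(E)(3)_{21}$. Combined with chosen confluent orientations of $R_1$ and $R_2$ inside $\calf(E_i)$ (whose existence I would take for granted, as $\oP_1$ and $\oP_2$ are assumed well defined), this yields a rewriting system on $\calf(E_1 \oplus E_2)$. Its termination is witnessed by a lexicographic order whose first component is the number of $(E_1 \text{ above } E_2)$ inversions in a tree monomial, strictly decreased by every application of $\cald$, and whose secondary components are monomial orders internal to $\oP_1$ and $\oP_2$. Iterated reduction then sends every tree monomial to a sum of normal forms in which all $E_2$-vertices sit above all $E_1$-vertices, and these normal forms are precisely the image of $\xi(n)$.

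Injectivity reduces to uniqueness of the normal form. By Newman's lemma, a terminating rewriting system is confluent if and only if every critical pair (a minimal overlap of two rules) completes to a common reduct. Since each of $R_1, R_\cald, R_2$ lies in $\calf(E)(3)$ and the generators are binary, two such cubic rules can overlap in at most $3 + 3 - 2 = 4$ leaves: the overlap necessarily takes the shape of a tree with four leaves and three binary vertices sharing one internal edge. Every critical pair therefore sits inside $\calf(E)(4)$, and the hypothesis that $\xi(4)$ is an isomorphism asserts exactly that both resolutions of any such critical pair descend to the same element of $\oP(4)$. Newman's lemma then delivers a unique normal form at every arity, hence injectivity of $\xi(n)$ for all $n \ge 2$.

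The main obstacle will be the honest bookkeeping rather than any conceptual leap: one has to enumerate the overlaps of $R_\cald$ with itself and with the $R_i$-rules, verify that the chosen termination order is compatible with every orientation, and check that no critical pair escapes to higher arity. The last point is the decisive one, and it is secured by the quadratic (hence cubic-in-generators) nature of the relations, which caps every overlap at four leaves. Once this is in place, the theorem follows uniformly in $n$ with no separate induction on $n \ge 5$ required.
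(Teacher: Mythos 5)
The paper does not actually prove this statement; it quotes \cite[Theorem 2.3]{M}, so your attempt has to be measured against Markl's argument there. Your overall strategy --- read the result as a coherence statement, observe that quadratic relations on binary generators can only overlap inside trees with four leaves, and invoke Newman's lemma --- correctly isolates why $n=4$ is the critical arity, and it is the standard rewriting/Gr\"obner-basis way of thinking about operadic distributive laws.

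There is, however, a genuine gap at the point where you ``take for granted'' confluent orientations of $R_1$ and $R_2$. That $\oP_1$ and $\oP_2$ are well-defined quadratic operads does \emph{not} imply that they admit quadratic Gr\"obner bases (are PBW); there exist Koszul quadratic operads that are not PBW for any choice of monomial order, and the theorem is stated for arbitrary quadratic $\oP_1,\oP_2$. Both your identification of the domain of $\xi(n)$ with the span of normal forms and your equivalence ``$\xi(4)$ is an isomorphism iff all arity-$4$ critical pairs resolve'' rely on this unstated PBW hypothesis: without it the normal forms of the combined system overcount $\dim\bigoplus_l\oP(4)_l$, local confluence can already fail at $R_1$--$R_1$ or $R_2$--$R_2$ overlaps for reasons having nothing to do with $\cald$, and Newman's lemma no longer delivers injectivity of $\xi(n)$. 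Markl's proof avoids this by treating $\oP_1$ and $\oP_2$ as black boxes: the arity-$4$ hypothesis is used to endow the composite $\Sigma$-module $\bigoplus_l\oP(n)_l$ with an associative operad composition (equivalently, to verify Beck's axioms for the induced transformation), which then inverts $\xi$; no monomial basis of the factors is required. Your argument does suffice for every case treated in this paper, since $\Ass$, $\Com$, $\Lie$ and $\Mag$ are all PBW as shuffle operads, but as a proof of the cited theorem it needs either that extra hypothesis or Markl's more intrinsic route. A smaller point: rewriting in the presence of the symmetric group actions must be set up for shuffle operads, and the orientation $x\rightarrow\cald(x)$ together with your inversion-counting order should be checked in that framework.
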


It can also be shown that the maps $\xi(n)$ are epimorphisms for an arbitrary $\cald$ as in 
equation \eqref{sigma3map}. 
Since both the domain and codomain of $\xi(n)$ are finite dimensional, it is enough to verify that
\[
\dim \bigoplus_{1\leq l\leq 4} \! \oP(4)_l = \dim \oP(4).
\]
It is clear that this equation, when expressed in terms of structure constants, leads to a system 
of quadratic equations without constant terms.
In particular, taking $\cald$ to be identically zero always gives a distributive law, 
the \emph{trivial} one.

The discussion in this section makes clear the prominent r\^ole played by 
operadic homogeneous quadratic distributive laws. 
In the rest of this article we will deal exclusively with such distributive laws, 
and will therefore omit the adjectives \emph{operadic homogeneous quadratic}
and speak simply about \emph{distributive laws}.


\subsection{Case studies}

In the following sections we describe all distributive laws between the Three Graces. 
It suffices to consider the seven cases 
in the first column of the following table,
since the dual cases in the second column follow by Lemma~\ref{duallemma}: 

\begin{center}
\begin{tabular}{l@{\qquad\qquad}l}
distributive law & Koszul dual
\\ \midrule
$\dl\uAss\uAss$ & self-dual
\\
$\dl{\Ass}{\Ass}$ & self-dual
\\
 $\dl\Lie{\Ass}$ &$\dl{\Ass}\Com$ 
\\
 $\dl\Com{\Ass}$ & $\dl{\Ass}\Lie$ 
\\
$\dl\Com\Com$ & $\dl\Lie\Lie$
\\
$\dl\Com\Lie$ & self-dual
\\
$\dl\Lie\Com$ & self-dual
\end{tabular}
\end{center}


\section{Distributive laws $\dl{\Ass}{\Ass}$}
\label{Dnes_jsem_letal_na_205ce.}

In this section we describe all distributive laws of the associative operad over itself. 
We will analyze first the versions living in the world of nonsymmetric
operads\footnote{Sometimes also called non-$\Sigma$ operads.} 
where distributive laws are given by formulas without permutation of variables, 
and then we move to the general case. 
The main result, Theorem~\ref{Zitra_Duac_v_Koline.}, states that there are only 
three non-isomorphic distributive laws --- the trivial one, the truncated one, 
and the one for nonsymmetric Poisson algebras (see Remark \ref{NSPA} below).   


\subsection{Non-$\Sigma$ version.} In this subsection we prove:

\begin{theorem}
\label{Pred_odjezdem_do_Ostravy}
The only distributive laws between two associative multiplications
that do not involve permutations of variables are given by
\[
\begin{array}{lll}
(a) &\quad (x\circ y) \bullet z \;=\; 0, &\quad x \bullet (y \circ z) \;=\; 0
\\
(b) &\quad (x\circ y) \bullet z \;=\; 0, &\quad x \bullet (y \circ z) \;=\; ( x \bullet y )\circ z
\\
(c) &\quad (x\circ y) \bullet z \;=\; x \circ( y \bullet z ), &\quad x \bullet (y \circ z) \;=\; 0
\\
(d) &\quad (x\circ y) \bullet z \;=\; x \circ( y \bullet z ), &\quad x \bullet (y \circ z) \;=\; ( x \bullet y )\circ z
\end{array}
\]
\end{theorem}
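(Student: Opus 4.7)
The plan is to parametrize the general ansatz by four scalars, appeal to Theorem~\ref{turmo} to reduce coherence to a finite check at arity~$4$, extract a quadratic system from critical-pair computations, and solve it to recover exactly the four listed laws.

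In the non-$\Sigma$ setting the space $\calf(E)(3)_{12}$ is spanned by $(x\circ y)\bullet z$ and $x\bullet(y\circ z)$, and $\calf(E)(3)_{21}$ by $(x\bullet y)\circ z$ and $x\circ(y\bullet z)$, so the most general candidate reads
\begin{align*}
(x\circ y)\bullet z &= A\,(x\bullet y)\circ z + B\,x\circ(y\bullet z),\\
x\bullet(y\circ z) &= C\,(x\bullet y)\circ z + D\,x\circ(y\bullet z).
\end{align*}
By Theorem~\ref{turmo}, $\cald$ is a distributive law iff $\xi(4)$ is an isomorphism; since $\xi(4)$ is always surjective and $\dim\bigoplus_l \oP(4)_l = 1+3+3+1 = 8$ (the compositions of $4$), the test reduces to confluence, on arity-$4$ trees, of the rewrite system comprising the two rules above together with the associativity of $\bullet$ and $\circ$ oriented toward right-normal form.

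I would then compute the two reductions of each critical overlap and equate the results. The key overlaps are:
\begin{itemize}
\item $((x\circ y)\bullet z)\bullet w = (x\circ y)\bullet(z\bullet w)$ (associativity of $\bullet$ meeting rule~1), yielding $A(A+B)=A(B-1)=B(B-1)=0$;
\item $((a\circ b)\circ c)\bullet d = (a\circ(b\circ c))\bullet d$ (associativity of $\circ$ inside rule~1), yielding $A(A-C)=AD=B(B-1)=0$;
\item the two symmetric overlaps obtained by exchanging the roles of $\bullet$ and $\circ$, yielding $C(C-1)=D(C-1)=D(C+D)=0$ and $C(C-1)=AD=D(B-D)=0$;
\item mixed overlaps such as $(x\circ y)\bullet(z\circ w)$ and $x\bullet((y\circ z)\bullet w)$, which turn out to impose no new conditions.
\end{itemize}

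From $B(B-1)=C(C-1)=0$ one has $B,C\in\{0,1\}$. The equations $A(A+B)=A(A-C)=A(B-1)=0$ then force $A=0$: indeed $A\ne 0$ would require $B=1$, $A=-B=-1$, and $C=A=-1$, contradicting $C\in\{0,1\}$. The symmetric analysis $D(C+D)=D(B-D)=D(C-1)=0$ forces $D=0$. With $A=D=0$ all remaining constraints are trivially satisfied, and each of the four choices $(B,C)\in\{0,1\}^2$ is easily verified to give a consistent distributive law; they correspond bijectively to cases~(a)--(d). The main obstacle will be the systematic enumeration of critical pairs without duplication or omission: Theorem~\ref{turmo} keeps the computation finite at arity~$4$, but careful bookkeeping of all applicable rewrites inside each overlap, and in particular tracking planar leaf order under substitution of multi-leaf subtrees into the rewrite rules, is the delicate part.
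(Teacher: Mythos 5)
Your proposal is correct and reaches exactly the classification $(A,B,C,D)=(0,B,C,0)$ with $B,C\in\{0,1\}$, but it travels a genuinely different road from the paper. The paper never mentions critical pairs: it assembles the $40\times 40$ matrix of all cubic consequences of the four quadratic relations over the polynomial ring ${\bfk}[a,b,c,d]$ (with the sign convention $(A,B,C,D)=(-a,-b,-c,-d)$), reduces it to a partial Smith form $\mathrm{diag}(I_{32},L)$, and demands that the residual block vanish; a Gr\"obner basis computation then yields $a$, $d$, $b(b+1)$, $c(c+1)$, i.e.\ your $A=D=0$, $B(B-1)=C(C-1)=0$. Your route instead orients the associativities toward right combs and resolves the overlap ambiguities of the resulting rewriting system --- the operadic diamond-lemma formulation. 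The two are equivalent: since $\xi(4)$ is always surjective and the $8$ right-comb-over-right-comb monomials span $\oP(4)$, the rank condition ``$\mathrm{rank}\,[RR]=32$'' is precisely the statement that all arity-$4$ critical pairs resolve; it would be worth making this equivalence explicit rather than asserting ``the test reduces to confluence'' in one clause, since it is the hinge of the whole argument. I checked your four displayed overlap computations and they are right; your enumeration is also complete (the only remaining ambiguities are $(x\bullet(y\circ z))\bullet w$ and $(x\circ y)\bullet(z\circ w)$, which yield $AD=0$, $A(C-1)=0$ and $D(B-1)=0$ --- not literally ``no conditions'' as you say, but redundant on the variety cut out by the others, which is what matters). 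The trade-off between the two approaches is clear: yours is computer-free and exposes why $A$ and $D$ must die (an $A\neq 0$ would force $C=A=-1$, contradicting $C(C-1)=0$), whereas the paper's mechanical linear-algebra pipeline is the one that survives the transition to the symmetric case of Theorem~\ref{vcera_jsem_se_proti_vetru_nadrel}, where the analogous hand enumeration over a $960$-dimensional space would be impractical.
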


\begin{remark}
\label{NSPA}
Distributive law (a) is the trivial one. 
Distributive law (d) describes structures studied by the second author in \cite{M}, 
where they were called `nonsymmetric Poisson algebras'.
The corresponding distributive law was written as
\[
\langle x\cdot y,z\rangle = x\cdot \langle y,z\rangle, \hskip 1cm
\langle x,y\cdot z\rangle = \langle x,y\rangle \cdot z,
\] 
which is indeed a nonsymmetric form of equation \eqref{derivationlaw}. 
The same structures were later called ${\it As}^{(2)}$-algebras in \cite{encyclopedia}.
\end{remark}

\begin{proof}[Proof of Theorem~\ref{Pred_odjezdem_do_Ostravy}]
To save space, we will omit in this proof the symbol $\circ$ and write $\cdot$ instead of $\bullet$. 
We will also omit parentheses whenever the meaning is clear. 
We therefore write for example $xy \cdot z$ instead of $(x \circ y) \bullet z$.

Let $\uBB$ be the free nonsymmetric operad generated by two binary operations denoted
$xy$ and $x \cdot y$.
We use the following ordered basis for $\uBB(3)$ consisting of eight monomials:
\[
(xy)z, \quad x(yz), \quad
(x \cdot y) \cdot z, \quad x \cdot (y \cdot z), \quad
xy \cdot z, \quad x \cdot yz, \quad
(x \cdot y)z, \quad x(y \cdot z).
\]
We identify quadratic relations with row vectors of coefficients with respect to this basis.
Consider the ideal $\mathbf{I} \subset \uBB$ generated by the subspace 
$R = \mathbf{I}(3) \subset \uBB(3)$ 
which is the row space of the following matrix:
\[
[R]
=
\left[
\begin{array}{r@{\quad}r@{\qquad}r@{\quad}r@{\qquad}r@{\qquad}r@{\qquad}r@{\qquad}r}
1 & -1 & 0 &  0 & 0 & 0 & 0 & 0 \\[-1pt]
0 &  0 & 1 & -1 & 0 & 0 & 0 & 0 \\[-1pt]
0 &  0 & 0 &  0 & 1 & 0 & a & b \\[-1pt]
0 &  0 & 0 &  0 & 0 & 1 & c & d
\end{array}
\right]
\]
Row 1 expresses the associativity of $xy$.
Row 2 expresses the associativity of $x \cdot y$.
Rows 3 and 4 express two relations which may also be written as rewrite rules:
\[
\begin{array}{l@{\qquad}l@{\qquad}l}
xy \cdot z + a \; (x \cdot y)z + b \; x(y \cdot z) \equiv 0
&
\text{or}
&
xy \cdot z \;\longrightarrow\; - a \; (x \cdot y)z - b \; x(y \cdot z),
\\
x \cdot yz + c \; (x \cdot y)z + d \; x(y \cdot z) \equiv 0
&
\text{or}
&
x \cdot yz \;\longrightarrow\; - c \; (x \cdot y)z - d \; x(y \cdot z).
\end{array}
\]
These rules allow us to eliminate binary trees with root operation $x \cdot y$
by replacing them by linear combinations of binary trees\footnote{We
  use the standard bijection between monomials and rooted trees,
c.f.~Remark \ref{treeremark}.}\ with root
operation $xy$.
Let us denote the four relations corresponding to the four rows of $[R]$ as follows:
\[
\begin{array}{l}
\alpha_1(x,y,z) = (xy)z - x(yz),
\\
\alpha_2(x,y,z) = (x \cdot y) \cdot z - x \cdot (y \cdot z),
\\
\beta_1(x,y,z) = xy \cdot z + a \; (x \cdot y)z + b \; x(y \cdot z),
\\
\beta_2(x,y,z) = x \cdot yz + c \; (x \cdot y)z + d \; x(y \cdot z).
\end{array}
\] 
Let $\rho(x,y,z)$ represent any of these four relations.
Then $\rho(x,y,z)$ has ten cubic (arity 4) consequences, namely
\begin{equation}
\label{cubicconsequences}
\begin{array}{l@{\quad}l@{\quad}l@{\quad}l@{\quad}l}
\rho(wx,y,z), &
\rho(w \cdot x,y,z), &
\rho(w,xy,z), &
\rho(w,x \cdot y,z), &
\rho(w,x,yz),
\\
\rho(w,x,y \cdot z), &
\rho(w,x,y)z, &
\rho(w,x,y) \cdot z, &
w\rho(x,y,z), &
w \cdot \rho(x,y,z).
\end{array}
\end{equation}
Altogether the four relations $\alpha_1$, $\alpha_2$, $\beta_1$, $\beta_2$ have 40 cubic consequences
which span the subspace $RR = \mathbf{I}(4) \subset \uBB(4)$.
The subspace $RR$ may be identified with the row space of the $40 \times 40$ matrix $[RR]$: 
the rows correspond to the consequences of the four quadratic relations (ordered in some convenient way),
and the columns correspond to the monomial basis of $\uBB(4)$ ordered first by association type
as follows:
\begin{equation}
\label{cubicmonomials1}
\begin{array}{l}
( ( w \star_1 x ) \star_2 y ) \star_3 z,
\qquad
( w \star_1 ( x \star_2 y ) ) \star_3 z,
\qquad
( w \star_1 x ) \star_2 ( y \star_3 z ),
\\
w \star_1 ( ( x \star_2 y ) \star_3 z ),
\qquad
w \star_1 ( x \star_2 ( y \star_3 z ) ).
\end{array}
\end{equation}
Within each association type, the sequence ${\star_1}{\star_2}{\star_3}$
represents one of the eight sequences of operation symbols;
we order these as follows, where the vertical line $|$ represents the operation symbol for $xy$:
\begin{equation}
\label{cubicmonomials2}
{\star_1}{\star_2}{\star_3} \quad = \quad 
|||, \quad ||{\cdot}, \quad |{\cdot}|, \quad |{\cdot}{\cdot}, \quad 
{\cdot}||, \quad {\cdot}|{\cdot}, \quad {\cdot}{\cdot}|, \quad {\cdot}{\cdot}{\cdot},
\end{equation}
The matrix $[RR]$ has entries in the set $\{ 0, 1, -1, a, b, c, d \}$ 
and hence may be regarded as a matrix over the polynomial ring ${\bfk}[a,b,c,d]$.
This matrix is displayed in Figure \ref{matrix[RR]} with dot, $+, -$ for $0, 1, -1$ respectively. 

\begin{figure}[ht]
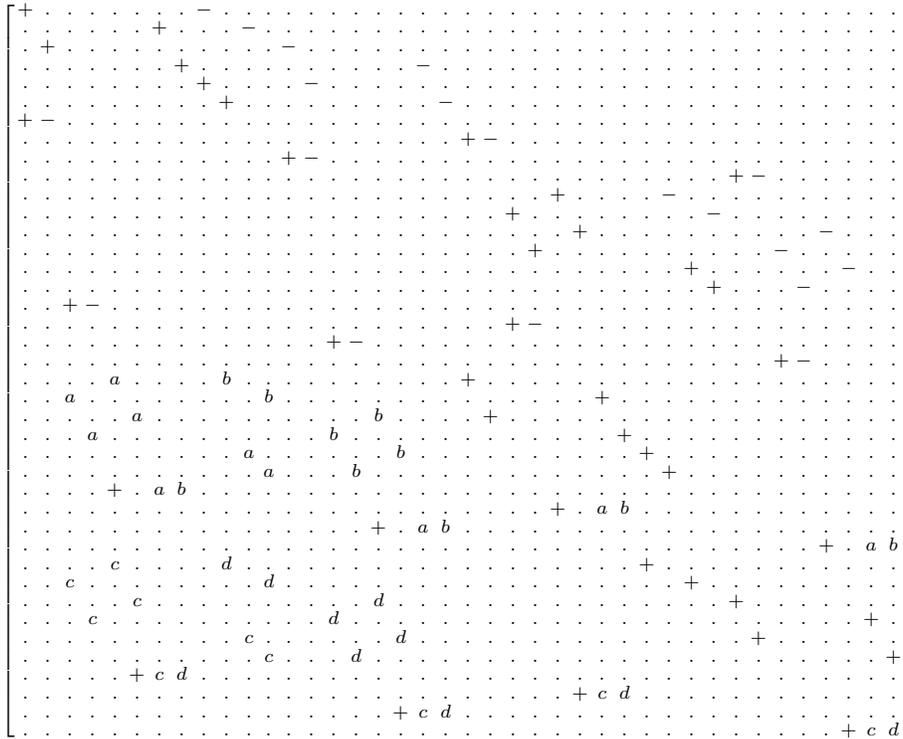

\scriptsize
$
\left[ \!\!\!
\begin{array}{
c@{\;}c@{\;}c@{\;}c@{\;}c@{\;}c@{\;}c@{\;}c@{\;}c@{\;}c@{\;}
c@{\;}c@{\;}c@{\;}c@{\;}c@{\;}c@{\;}c@{\;}c@{\;}c@{\;}c@{\;}
c@{\;}c@{\;}c@{\;}c@{\;}c@{\;}c@{\;}c@{\;}c@{\;}c@{\;}c@{\;}
c@{\;}c@{\;}c@{\;}c@{\;}c@{\;}c@{\;}c@{\;}c@{\;}c@{\;}c
}
+&.&.&.&.&.&.&.&-&.&.&.&.&.&.&.&.&.&.&.&.&.&.&.&.&.&.&.&.&.&.&.&.&.&.&.&.&.&.&.\\[-1pt] 
.&.&.&.&.&.&+&.&.&.&-&.&.&.&.&.&.&.&.&.&.&.&.&.&.&.&.&.&.&.&.&.&.&.&.&.&.&.&.&.\\[-1pt] 
.&+&.&.&.&.&.&.&.&.&.&.&-&.&.&.&.&.&.&.&.&.&.&.&.&.&.&.&.&.&.&.&.&.&.&.&.&.&.&.\\[-1pt] 
.&.&.&.&.&.&.&+&.&.&.&.&.&.&.&.&.&.&-&.&.&.&.&.&.&.&.&.&.&.&.&.&.&.&.&.&.&.&.&.\\[-1pt] 
.&.&.&.&.&.&.&.&+&.&.&.&.&-&.&.&.&.&.&.&.&.&.&.&.&.&.&.&.&.&.&.&.&.&.&.&.&.&.&.\\[-1pt] 
.&.&.&.&.&.&.&.&.&+&.&.&.&.&.&.&.&.&.&-&.&.&.&.&.&.&.&.&.&.&.&.&.&.&.&.&.&.&.&.\\[-1pt] 
+&-&.&.&.&.&.&.&.&.&.&.&.&.&.&.&.&.&.&.&.&.&.&.&.&.&.&.&.&.&.&.&.&.&.&.&.&.&.&.\\[-1pt] 
.&.&.&.&.&.&.&.&.&.&.&.&.&.&.&.&.&.&.&.&+&-&.&.&.&.&.&.&.&.&.&.&.&.&.&.&.&.&.&.\\[-1pt] 
.&.&.&.&.&.&.&.&.&.&.&.&+&-&.&.&.&.&.&.&.&.&.&.&.&.&.&.&.&.&.&.&.&.&.&.&.&.&.&.\\[-1pt] 
.&.&.&.&.&.&.&.&.&.&.&.&.&.&.&.&.&.&.&.&.&.&.&.&.&.&.&.&.&.&.&.&+&-&.&.&.&.&.&.\\[-1pt] 
.&.&.&.&.&.&.&.&.&.&.&.&.&.&.&.&.&.&.&.&.&.&.&.&+&.&.&.&.&-&.&.&.&.&.&.&.&.&.&.\\[-1pt] 
.&.&.&.&.&.&.&.&.&.&.&.&.&.&.&.&.&.&.&.&.&.&+&.&.&.&.&.&.&.&.&-&.&.&.&.&.&.&.&.\\[-1pt] 
.&.&.&.&.&.&.&.&.&.&.&.&.&.&.&.&.&.&.&.&.&.&.&.&.&+&.&.&.&.&.&.&.&.&.&.&-&.&.&.\\[-1pt] 
.&.&.&.&.&.&.&.&.&.&.&.&.&.&.&.&.&.&.&.&.&.&.&+&.&.&.&.&.&.&.&.&.&.&-&.&.&.&.&.\\[-1pt] 
.&.&.&.&.&.&.&.&.&.&.&.&.&.&.&.&.&.&.&.&.&.&.&.&.&.&.&.&.&.&+&.&.&.&.&.&.&-&.&.\\[-1pt] 
.&.&.&.&.&.&.&.&.&.&.&.&.&.&.&.&.&.&.&.&.&.&.&.&.&.&.&.&.&.&.&+&.&.&.&-&.&.&.&.\\[-1pt] 
.&.&+&-&.&.&.&.&.&.&.&.&.&.&.&.&.&.&.&.&.&.&.&.&.&.&.&.&.&.&.&.&.&.&.&.&.&.&.&.\\[-1pt] 
.&.&.&.&.&.&.&.&.&.&.&.&.&.&.&.&.&.&.&.&.&.&+&-&.&.&.&.&.&.&.&.&.&.&.&.&.&.&.&.\\[-1pt] 
.&.&.&.&.&.&.&.&.&.&.&.&.&.&+&-&.&.&.&.&.&.&.&.&.&.&.&.&.&.&.&.&.&.&.&.&.&.&.&.\\[-1pt] 
.&.&.&.&.&.&.&.&.&.&.&.&.&.&.&.&.&.&.&.&.&.&.&.&.&.&.&.&.&.&.&.&.&.&+&-&.&.&.&.\\[-1pt] 
.&.&.&.&a&.&.&.&.&b&.&.&.&.&.&.&.&.&.&.&+&.&.&.&.&.&.&.&.&.&.&.&.&.&.&.&.&.&.&.\\[-1pt] 
.&.&a&.&.&.&.&.&.&.&.&b&.&.&.&.&.&.&.&.&.&.&.&.&.&.&+&.&.&.&.&.&.&.&.&.&.&.&.&.\\[-1pt] 
.&.&.&.&.&a&.&.&.&.&.&.&.&.&.&.&b&.&.&.&.&+&.&.&.&.&.&.&.&.&.&.&.&.&.&.&.&.&.&.\\[-1pt] 
.&.&.&a&.&.&.&.&.&.&.&.&.&.&b&.&.&.&.&.&.&.&.&.&.&.&.&+&.&.&.&.&.&.&.&.&.&.&.&.\\[-1pt] 
.&.&.&.&.&.&.&.&.&.&a&.&.&.&.&.&.&b&.&.&.&.&.&.&.&.&.&.&+&.&.&.&.&.&.&.&.&.&.&.\\[-1pt] 
.&.&.&.&.&.&.&.&.&.&.&a&.&.&.&b&.&.&.&.&.&.&.&.&.&.&.&.&.&+&.&.&.&.&.&.&.&.&.&.\\[-1pt] 
.&.&.&.&+&.&a&b&.&.&.&.&.&.&.&.&.&.&.&.&.&.&.&.&.&.&.&.&.&.&.&.&.&.&.&.&.&.&.&.\\[-1pt] 
.&.&.&.&.&.&.&.&.&.&.&.&.&.&.&.&.&.&.&.&.&.&.&.&+&.&a&b&.&.&.&.&.&.&.&.&.&.&.&.\\[-1pt] 
.&.&.&.&.&.&.&.&.&.&.&.&.&.&.&.&+&.&a&b&.&.&.&.&.&.&.&.&.&.&.&.&.&.&.&.&.&.&.&.\\[-1pt] 
.&.&.&.&.&.&.&.&.&.&.&.&.&.&.&.&.&.&.&.&.&.&.&.&.&.&.&.&.&.&.&.&.&.&.&.&+&.&a&b\\[-1pt] 
.&.&.&.&c&.&.&.&.&d&.&.&.&.&.&.&.&.&.&.&.&.&.&.&.&.&.&.&+&.&.&.&.&.&.&.&.&.&.&.\\[-1pt] 
.&.&c&.&.&.&.&.&.&.&.&d&.&.&.&.&.&.&.&.&.&.&.&.&.&.&.&.&.&.&+&.&.&.&.&.&.&.&.&.\\[-1pt] 
.&.&.&.&.&c&.&.&.&.&.&.&.&.&.&.&d&.&.&.&.&.&.&.&.&.&.&.&.&.&.&.&+&.&.&.&.&.&.&.\\[-1pt] 
.&.&.&c&.&.&.&.&.&.&.&.&.&.&d&.&.&.&.&.&.&.&.&.&.&.&.&.&.&.&.&.&.&.&.&.&.&.&+&.\\[-1pt] 
.&.&.&.&.&.&.&.&.&.&c&.&.&.&.&.&.&d&.&.&.&.&.&.&.&.&.&.&.&.&.&.&.&+&.&.&.&.&.&.\\[-1pt] 
.&.&.&.&.&.&.&.&.&.&.&c&.&.&.&d&.&.&.&.&.&.&.&.&.&.&.&.&.&.&.&.&.&.&.&.&.&.&.&+\\[-1pt] 
.&.&.&.&.&+&c&d&.&.&.&.&.&.&.&.&.&.&.&.&.&.&.&.&.&.&.&.&.&.&.&.&.&.&.&.&.&.&.&.\\[-1pt] 
.&.&.&.&.&.&.&.&.&.&.&.&.&.&.&.&.&.&.&.&.&.&.&.&.&+&c&d&.&.&.&.&.&.&.&.&.&.&.&.\\[-1pt] 
.&.&.&.&.&.&.&.&.&.&.&.&.&.&.&.&.&+&c&d&.&.&.&.&.&.&.&.&.&.&.&.&.&.&.&.&.&.&.&.\\[-1pt] 
.&.&.&.&.&.&.&.&.&.&.&.&.&.&.&.&.&.&.&.&.&.&.&.&.&.&.&.&.&.&.&.&.&.&.&.&.&+&c&d 
\end{array}
\!\!\! \right]
$
\vspace{-3mm}
\caption{Matrix $[RR]$: cubic consequences of quadratic relations}
\label{matrix[RR]}
\end{figure}

To understand how the rank of $[RR]$ depends on the parameters $a, b, c, d$ we first use elementary
row and column operations to compute a partial Smith form as described in 
\cite[Chapter 8]{BD-book}.
Roughly speaking, we repeatedly move entries equal to $\pm 1$ to the upper left diagonal of the
matrix, change their signs if necessary, and then use each resulting diagonal 1 to eliminate the
entries below and to the right, continuing until the lower right block no longer contains a nonzero
scalar.
When this computation terminates, we have reduced $[RR]$ to the block-diagonal matrix 
$\mathrm{diag}(I_{32},L)$,
which is row-column equivalent to $[RR]$ and hence has the same rank as $[RR]$,
where $L$ is an $8 \times 8$ matrix over ${\bfk}[a,b,c,d]$ which has two zero rows and two zero columns.
After deleting these superfluous rows and columns, we obtain this $6 \times 6$ matrix:
\[
L' 
=
\left[
\begin{array}{cccccc}
-a d  &  -b^2-b  &  a^2-a c  &  0  &  0  &  0
\\[-1pt]
0  &  b d+d  &  -a c-a  &  0  &  0  &  0
\\[-1pt]
a d  &  b d-d^2  &  c^2+c  &  0  &  0  &  0
\\[-1pt]
0  &  0  &  0  &  -b^2-b  &  -a b-a  &  -a^2-a b
\\[-1pt]
0  &  0  &  0  &  -a d  &  0  &  a d
\\[-1pt]
0  &  0  &  0  &  -c d-d^2  &  -c d-d  &  -c^2-c
\end{array}
\right]
\]
In order for the map representing the distributive law to be an isomorphism, 
it is necessary and sufficient
that $[RR]$ have rank 32, or equivalently that $L'$ be the zero matrix.
Consider the set $G$ consisting of the nonzero entries of $L'$.
We compute a Gr\"obner basis for the ideal $J \subset {\bfk}[a,b,c,d]$ 
generated by $G$ with respect 
to the deglex monomial order determined by $a \prec b \prec c \prec d$.
This Gr\"obner basis for $J$ consists of the polynomials $a$, $d$, $b(b{+}1)$, $c(c{+}1)$.
Hence $J$ is a zero-dimensional ideal whose zero set consists of exactly four points:
\[
(a,b,c,d) \;\; = \;\; (0,0,0,0), \quad (0,0,-1,0), \quad (0,-1,0,0), \quad (0,-1,-1,0).
\]
These solutions correspond to the following pairs of rewrite rules
\[
\begin{array}{lll}
(a) &\quad xy \cdot z \;\longrightarrow\; 0, &\quad x \cdot yz \;\longrightarrow\; 0
\\
(b) &\quad xy \cdot z \;\longrightarrow\; 0, &\quad x \cdot yz \;\longrightarrow\; ( x \cdot y ) z
\\
(c) &\quad xy \cdot z \;\longrightarrow\; x ( y \cdot z ), &\quad x \cdot yz \;\longrightarrow\; 0
\\
(d) &\quad xy \cdot z \;\longrightarrow\; x ( y \cdot z ), &\quad x \cdot yz \;\longrightarrow\; ( x \cdot y ) z
\end{array}
\]
which give the four nonsymmetric
laws $\dl{\uAss}{\uAss}$  of Theorem~\ref{Pred_odjezdem_do_Ostravy}.
\end{proof}


\subsection{General version}

In this subsection we generalize Theorem \ref{Pred_odjezdem_do_Ostravy} by allowing permutations 
of variables:

\begin{theorem}
\label{vcera_jsem_se_proti_vetru_nadrel}
The only distributive laws between two associative multiplications
are the four laws of Theorem \ref{Pred_odjezdem_do_Ostravy} together with the following three:
\[
\begin{array}{lll}
(e) 
&\qquad 
(x \circ y) \bullet z \,=\, 0, 
&\qquad
x \bullet (y \circ z) \,=\, y \circ ( x \bullet z )
\\
(f) 
&\qquad 
(x \circ y) \bullet z \,=\, ( x \bullet z )  \circ y,
&\qquad
x \bullet (y \circ z) \,=\, 0
\\
(g) 
&\qquad 
(x \circ y) \bullet z \,=\, ( x \bullet z ) \circ y,
&\qquad
x \bullet (y \circ z) \,=\, y \circ ( x \bullet z ).
\end{array}
\]
\end{theorem}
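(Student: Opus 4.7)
The plan is to emulate the computer-algebra proof of Theorem~\ref{Pred_odjezdem_do_Ostravy}, but with a fully $\Sigma_3$-equivariant ansatz for the two rewrite rules rather than a non-$\Sigma$ one. Both $\calf(E)(3)_{12}$ and $\calf(E)(3)_{21}$ decompose as two copies of the regular $\Sigma_3$-representation, since each of the four generators $(x\circ y)\bullet z$, $x\bullet(y\circ z)$, $(u\bullet v)\circ w$, $u\circ(v\bullet w)$ has trivial stabilizer in $\Sigma_3$. The space of $\Sigma_3$-equivariant maps between them is therefore $24$-dimensional, and a general $\cald$ is specified by $12$ coefficients in the expansion of $(x\circ y)\bullet z$ and $12$ more in that of $x\bullet(y\circ z)$, each over the six $\Sigma_3$-translates of the two target tree shapes.

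With this ansatz I would form the matrix $[RR]$ of cubic (arity-$4$) consequences of the two associativity axioms and the two parametric rewrite rules, in exact analogy with Figure~\ref{matrix[RR]}. By Theorem~\ref{turmo}, $\xi(n)$ is an isomorphism for all $n\ge 2$ iff $\xi(4)$ is, which translates into the condition that the rank of $[RR]$ attains the parameter-independent value $\dim\bigoplus_{1\le l\le 4}\oP(4)_l$. Partial Smith-form reduction over the $24$-variable polynomial ring, following \cite[Chapter~8]{BD-book}, should isolate an obstruction block $L'$ whose entries are quadratic polynomials without constant term, so that a Gr\"obner-basis computation for the ideal $J$ generated by those entries enumerates all distributive laws. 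Reading off the zero set of $J$ should yield exactly the seven laws (a)--(g); for (a)--(d) this is already certified by Theorem~\ref{Pred_odjezdem_do_Ostravy}, while for (e), (f), (g) a direct hand-verification in the style of Sections~\ref{Dnes_jsem_letal_na_205ce.} and~\ref{Pot_ze_mne_leje.} confirms that the corresponding $\xi(4)$ is a genuine isomorphism rather than merely rank-equivalent to one.

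The principal obstacle is sheer combinatorial size: with $24$ indeterminates $[RR]$ dwarfs its $40\times 40$ non-$\Sigma$ ancestor, and a naive deglex Gr\"obner basis could easily exceed available resources. I would mitigate this by exploiting the $\Sigma_3$-isotypic decomposition (trivial, sign, and standard components) to block-diagonalize the ansatz and the obstruction block into much smaller pieces. Alternatively one can stratify the parameter space by whether each rewrite rule vanishes: the stratum where both vanish yields (a), the single-zero strata recover (b), (c), (e) and (f), and only the generic ``both non-trivial'' stratum requires a full Gr\"obner computation, which should isolate exactly (d) and (g) and thereby show that no further solutions exist.
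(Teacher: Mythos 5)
Your proposal follows essentially the same route as the paper: a $24$-parameter $\Sigma_3$-equivariant ansatz for the two rewrite rules, the $960\times 960$ matrix of arity-$4$ consequences, a partial Smith form isolating a scalar-free obstruction block (here $\mathrm{diag}(I_{768},L)$ with $L$ of size $192\times192$), and a deglex Gr\"obner basis whose zero set consists of exactly the seven points (a)--(g). The paper carries out the computation directly without the isotypic or stratification shortcuts you suggest, and likewise closes with a hand verification of law (e); the approaches coincide in substance.
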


The proof is postponed to the end of this subsection. 
We note that the rewrite rule 
$(x \circ y) \bullet z \,=\, ( x \bullet z ) \circ y$ states that 
the right multiplications $- \circ y$ and $- \bullet z$ commute; 
similarly, $x \bullet (y \circ z) \,=\, y  \circ( x \bullet z )$ states that the left multiplications 
$y \circ -$ and $x \bullet -$ commute.

Let us denote by $\oA_a, \ldots \oA_g$ the operads defined by
distributive laws (a)--(g) of Theorems~\ref{Pred_odjezdem_do_Ostravy}
and~\ref{vcera_jsem_se_proti_vetru_nadrel} (in the given order). It turns
out that these operads fall into three isomorphism classes:
$\{\oA_a\}$,
$\{\oA_b,\oA_c,\oA_e,\oA_f\}$, and
$\{\oA_d,\oA_g\}$. 
The corresponding isomorphisms are given by changing one or both multiplications into the opposite, 
that is $\circ \mapsto \circ^\op$ and/or $\bullet \mapsto  \bullet^\op$. 
It is easy to verify that one gets the following isomorphism diagrams:
\[
\adjustbox{valign=m}{
\xymatrix@C=6em@R=4em{\oA_b\ar[r]_\cong^{\circ \mapsto \circ^\op}\ar[d]_{\bullet
\mapsto  \bullet^\op}^\cong & \oA_e\ar[d]_\cong^{\bullet
\mapsto  \bullet^\op}
\\
\oA_f\ar[r]^{\circ \mapsto \circ^\op}_\cong & \oA_c
}
}
\hskip 1em \hbox { and } \hskip 1em
\adjustbox{valign=m}{
\xymatrix@C=6em@R=4em{\oA_d\ar[r]^{\circ \mapsto \circ^\op}_\cong\ar[d]_{\bullet
\mapsto  \bullet^\op}^\cong & \oA_g\ar[d]^{\bullet
\mapsto  \bullet^\op}_\cong
\\
\oA_g\ar[r]^{\circ \mapsto \circ^\op}_\cong &\ \oA_d .
}
}
\]
One therefore has:

\begin{theorem}
\label{Zitra_Duac_v_Koline.}
There are precisely three nonisomorphic distributive laws between two associative
multiplications, namely
\begin{itemize}[leftmargin=*]
\item 
the trivial law (a),
\item
the truncated law represented by rewrite rules (b), (c), (e) or (f),
and 
\item
the law for nonsymmetric Poisson algebras represented by (d) or (g).
\end{itemize} 
\end{theorem}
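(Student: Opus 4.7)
The plan is as follows. Given the classification in Theorems \ref{Pred_odjezdem_do_Ostravy} and \ref{vcera_jsem_se_proti_vetru_nadrel} of all seven distributive laws $\oA_a, \ldots, \oA_g$, the theorem reduces to two tasks: (i) verify the isomorphisms displayed in the two commutative diagrams above, and (ii) confirm that the three resulting equivalence classes are mutually non-isomorphic.

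For (i) I would proceed arrow by arrow through the two diagrams. Each arrow is labelled by one of the substitutions $\circ \mapsto \circ^\op$, $\bullet \mapsto \bullet^\op$, or their composite; all of these induce bona fide automorphisms of the free operad $\calf(E_1 \oplus E_2)$ that preserve the quadratic associativity relations (since the opposite of an associative multiplication is again associative) and the bigrading by operation type. What needs to be checked is that such an automorphism carries the rewrite subspace $R_\cald$ of the source presentation onto that of the target. As a representative case, applying $\circ \mapsto \circ^\op$ to the defining rules of $\oA_b$ turns $x \bullet (y \circ z) = (x \bullet y) \circ z$ into $x \bullet (z \circ y) = z \circ (x \bullet y)$, and after relabelling $y \leftrightarrow z$ this is precisely the defining rule $x \bullet (y \circ z) = y \circ (x \bullet z)$ of $\oA_e$; the companion rule $(x \circ y) \bullet z = 0$ is invariant modulo $\Sigma_3$-action. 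Seven analogous substitutions settle the remaining arrows.

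For (ii) I would isolate a numerical invariant that separates the three classes. A natural choice is the dimension of the image of the structure map $\cald \colon \calf(E)(3)_{12} \to \calf(E)(3)_{21}$: this rank is $0$ for the trivial law $\oA_a$, is $6$ for each of the truncated laws $\oA_b, \oA_c, \oA_e, \oA_f$ (exactly one of the two $\Sigma_3$-orbits of rewrite rules contributes nontrivially), and is $12$ for the nonsymmetric Poisson-type laws $\oA_d$ and $\oA_g$. To promote this rank to an operadic invariant I would characterize the suboperads $\oP_1$ and $\oP_2$ intrinsically inside the combined operad, for instance as the two associative suboperads generated by a single binary generator of the appropriate bidegree; any operad isomorphism must then either preserve this pair or swap it, and swapping is itself a symmetry already implicit in the self-duality noted in the case-studies table.

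The main obstacle is exactly this last step: ensuring that the decomposition $E = E_1 \oplus E_2$ and the accompanying map $\cald$ are canonically recoverable from the abstract operad, so that the rank of $\cald$ is not merely presentation-dependent. Once this intrinsic characterization is established, the rank values $0, 6, 12$ separate the three classes unambiguously and, combined with the diagram chase in step (i), yield the stated trichotomy.
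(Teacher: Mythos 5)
Your step (i) is exactly the paper's argument: the authors simply exhibit the two diagrams of isomorphisms induced by $\circ \mapsto \circ^\op$ and $\bullet \mapsto \bullet^\op$ and leave the verification to the reader, and your sample computation ($\oA_b \to \oA_e$ via $\circ \mapsto \circ^\op$) is the right kind of check. The substance of my comments concerns step (ii), where the paper is silent (it merely asserts that the seven operads ``fall into three isomorphism classes'') and where your proposal, by your own admission, is incomplete at the decisive point. The rank of $\cald$ does take the values $0$, $6$, $12$ on the three classes, but as defined it is an invariant of the \emph{presentation} \eqref{presentation}, not of the operad: an abstract isomorphism $\oA \to \oA'$ only induces a $\Sigma_2$-equivariant isomorphism of the $4$-dimensional generating spaces, and there is no a priori reason it should carry the splitting $E = E_1 \oplus E_2$ (together with its ordering, which fixes the direction of $\cald$) to the corresponding splitting of the target. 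This is not a phantom worry: for $\oA_a$ the swap $\circ \leftrightarrow \bullet$ is an automorphism that interchanges $E_1$ and $E_2$, so the splitting is genuinely not rigid. Your proposed fix --- characterizing $\oP_1$ and $\oP_2$ intrinsically as ``the two associative suboperads generated by a binary generator'' --- does not work as stated either, because which $2$-dimensional $\Sigma_2$-submodules of $E$ generate a copy of $\Ass$ is itself something that must be computed in each of the three operads (a priori there could be further associative elements mixing $E_1$ and $E_2$, and ruling them out is a nontrivial computation in the $24$-dimensional space $\oA(3)$).

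So the proposal proves the ``at most three'' half of Theorem~\ref{Zitra_Duac_v_Koline.} (given Theorems~\ref{Pred_odjezdem_do_Ostravy} and~\ref{vcera_jsem_se_proti_vetru_nadrel}) in the same way the paper does, but the ``exactly three'' half still needs an honest invariant. A workable route is to compare the relation subspaces directly: an isomorphism $\oA \to \oA'$ amounts to an element $\phi \in GL(E)$ commuting with the $\Sigma_2$-action such that $\calf(\phi)$ maps the $24$-dimensional space $R \subset \calf(E)(3)$ onto $R'$, so non-isomorphism reduces to showing that the three relation spaces lie in distinct orbits of this (small, explicitly parametrizable) group --- a finite computation in the spirit of the rest of the paper. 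Either that computation, or an intrinsic reformulation of your rank invariant (for instance via the subvariety of pairs $(\mu,\nu) \in E \times E$ all of whose compositions $\mu \circ_i \nu$ vanish, which is visibly isomorphism-invariant), is what is needed to close the gap you correctly identified.
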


\begin{remark}
Note that
the operads $\oA_a$, $\oA_b$, $\oA_c$, and $\oA_d$ defined by
distributive laws (a)--(d) of Theorem~\ref{Pred_odjezdem_do_Ostravy}
are mutually non-isomorphic in the category of non-$\Sigma$
operads. Therefore in the category of algebras over non-symmetric
operads there are four different distributive laws between two
associative multiplications.
\end{remark}

Theorem \ref{Zitra_Duac_v_Koline.} has the 
following simple but very interesting consequence:

\begin{corollary}
\label{I_v_susarne_je_strasne_vedro.}
Up to isomorphism, 
the only distributive law between two associative multiplications
in the monoidal category of sets
is that of nonsymmetric Poisson algebras.
\end{corollary}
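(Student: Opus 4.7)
The plan is to reduce the set-theoretic classification to the linear one already established in Theorem~\ref{Zitra_Duac_v_Koline.}. A distributive law between two semigroup monads on Set is determined by a pair of natural rewrite rules of the shape
\[
(x \circ y) \bullet z = w_1(x,y,z), \qquad x \bullet (y \circ z) = w_2(x,y,z),
\]
where each $w_i$ must be a single monomial of bidegree $(1,1)$ in the variables $x,y,z$ (one $\circ$ and one $\bullet$); formal zero and formal linear combinations simply have no meaning in the category of sets.

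First, I would apply the free-${\bbk}$-vector-space functor $\mathrm{Set}\to\mathrm{Vect}_{\bbk}$ to any candidate set-theoretic law, obtaining a quadratic homogeneous distributive law of $\Ass$ over $\Ass$. By Theorem~\ref{Zitra_Duac_v_Koline.}, this linear law is isomorphic to exactly one of three: the trivial law (a), the truncated law, or the law for nonsymmetric Poisson algebras. Second, I would argue that the trivial and truncated classes cannot arise from Set: each of their representatives (a), (b), (c), (e), (f) listed in Theorems~\ref{Pred_odjezdem_do_Ostravy} and~\ref{vcera_jsem_se_proti_vetru_nadrel} has at least one defining rewrite rule whose right-hand side is the zero vector, which is not the linearization of any monomial in the free set-theoretic semigroup on $\{x,y,z\}$.

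Third, I would verify that the nonsymmetric Poisson law is realized in Set. Both presentations (d) and (g) have single-monomial right-hand sides that read off verbatim as set-theoretic rewrite rules, and the isomorphism between them, produced by the substitutions $\circ\mapsto\circ^{\op}$ and $\bullet\mapsto\bullet^{\op}$, is manifestly set-theoretic. The coherence demanded by Definition~\ref{defdl2} then follows for free from its linear counterpart via Theorem~\ref{turmo}, since the witnessing isomorphism $\xi(4)$ is built out of compositions of operations that already exist at the set level.

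The main obstacle I anticipate is making rigorous the dictionary between set-theoretic and ${\bbk}$-linear rewrite rules: one has to justify carefully that every distributive law between the two semigroup monads on Set linearizes to one of the seven forms (a)--(g), with the additional constraint that each equation carry a single basis monomial on the right-hand side, and conversely that an isomorphism of the linearized laws comes from a set-theoretic isomorphism precisely when it is induced by opposite operations. Once this translation is set up, the corollary follows by direct inspection of the classification list produced in \S\ref{Dnes_jsem_letal_na_205ce.}.
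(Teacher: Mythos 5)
Your proposal is correct and matches the argument the paper intends: the corollary is stated as an immediate consequence of Theorem~\ref{Zitra_Duac_v_Koline.}, the point being exactly that the representatives with a zero right-hand side cannot be realized in $\mathrm{Set}$, while the monomial rewrite rules (d) and (g) can. Your write-up merely makes explicit the linearization dictionary that the paper leaves implicit.
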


\begin{example}
\label{byhandexample}
Let us verify `by hand' that~(e) indeed determines a distributive law. 
We must verify that it is compatible with the associativity of $\bullet$ and $\circ$. 
We also need to check that the result of repeated applications of (e) does not depend on their order. 
Theorem \ref{turmo} tells us that it suffices to consider only expressions involving four variables.

{\it Compatibility with the associativity of $\circ$}. 
The associativity of $\circ$ means that
\[
\big(( y \circ z) \circ w\big) = \big( y \circ (z \circ w)\big),  
\] 
for arbitrary symbols $y,z,w$. Thus, for a symbol
$x$, one has
\begin{equation}
\label{streda_v_Ostrave}
x \bullet   \big(( y \circ z) \circ w\big) = 
x \bullet  \big( y \circ (z \circ w)\big)  .
\end{equation}
The compatibility with associativity means that both sides of this
equation remain equal after we apply, possibly repeatedly, rule (e) to
them. For the left side of~(\ref{streda_v_Ostrave}) we get
\[
x \bullet   \big(( y \circ z) \circ w\big) = (y \circ z) \circ (x
\bullet w),
\]
while the right hand side becomes
\[
x \bullet  \big( y \circ (z \circ w)\big)  =
y \circ   \big( x \bullet (z \circ w)\big)  =
y \circ  \big( z \circ (x \bullet w)\big).
\] 
So we need to check whether
\[
(y \circ z) \circ (x \bullet w) =  y \circ  \big( z \circ (x \bullet w)\big).
\]
This equality follows from the associativity of
$\circ$~. We need to do the same analysis for
\[
\big ( (x \circ   y) \circ z) \bullet w = 
\big( x \circ  ( y \circ z)\big) \bullet w.
\]
In this case~(e) turns both sides into $0$.

{\it Compatibility with the associativity of~$\bullet$}. 
We need to consider three equations implied by the associativity 
of~$\bullet$. The first one is
\[
(x \bullet y) \bullet (z \circ w) = x \bullet \big( y \bullet (z \circ w)\big). 
\]
Modifying the left hand side using~(e) gives
\[
(x \bullet y) \bullet (z \circ w) = 
z \circ \big(( x \bullet y)\bullet w),
\]
while
\[
x \bullet \big( y \bullet (z \circ w)\big) = x \bullet \big( z \circ
(y \bullet w) \big) = z \circ \big( x \bullet (y \bullet w)\big).
\]
However thanks to the associativity of~$\bullet$ we have
\[
z \circ (\big( x \bullet y)\bullet w) =  z \circ \big( x \bullet (y
\bullet w)\big).
\]
The next equation to analyze is
\[
\big(x \bullet (y \circ z)\big) \bullet w = 
x \bullet\big( (y \circ z) \bullet w \big).
\]
The left side expands as
\[
\big(x \bullet (y \circ z)\big) \bullet w = 
\big(y \circ (x \bullet z)\big) \bullet w = 0, 
\]
while the right side is seen to be zero immediately. 
The last equation to be considered is
\[
\big ((x \circ y)\bullet z) \bullet w = (x \circ y) \bullet (z \bullet w).
\]
But applying~(e) turns both sides immediately to zero.

{\it Independence of order}. 
All expressions featured above offered at most one way to
apply~(e). This is not true for
\[
(x \circ y) \bullet (z \circ w).
\]
Applying the first rule of~(e) first, with $(z \circ w)$ instead of
$z$, turns it into zero, while
applying the second rule of~(e) first we get
\[
(x \circ y) \bullet (z \circ w) = z \circ \big((x \circ y) \bullet w)\big),
\]
which is zero again, by the first rule of~(e). 
It is not difficult to see that the above finite number of cases was
all we needed to check, thus the
verification that (e) defines a~distributive law is finished.

\def\rightl{
\qbezier(1.5,-1.5)(5,-5)(8,-8)
}
\def\leftl{
\qbezier(-2,-2)(-5,-5)(-8,-8)
}

\def\Atree#1#2#3#4#5{
\unitlength=1pt
\begin{picture}(20.00,20.00)(0.00,0.00)
\thicklines
\put(0,0){
\put(0,0){\makebox(0.00,0.00){$#2$}}
\put(-10.00,10){\makebox(0.00,0.00){$#1$}}
\put(0,0){\leftl}
\put(-10,10){\rightl}
\put(0,0){\rightl}
\put(-12,9){\line(-1,-1){18.00}}
\put(-10,12.00){\vector(0,1){15}}
\put(-30,-15){\makebox(0.00,0.00)[t]{$#3$}}
\put(-10,-15){\makebox(0.00,0.00)[t]{$#4$}}
\put(10,-15){\makebox(0.00,0.00)[t]{$#5$}}
}
\end{picture}
}

\def\Btree#1#2#3#4#5{
\unitlength=1pt
\begin{picture}(20.00,20.00)(0,0)
\thicklines
\put(0,0){
\put(0,0){\makebox(0.00,0.00){$#2$}}
\put(10.00,10){\makebox(0.00,0.00){$#1$}}
\put(0,0){\rightl}
\put(10,10){\leftl}
\put(0,0){\leftl}
\put(12,9){\line(1,-1){18.00}}
\put(10,12.00){\vector(0,1){15.00}}
\put(30,-15){\makebox(0.00,0.00)[t]{$#5$}}
\put(10,-15){\makebox(0.00,0.00)[t]{$#4$}}
\put(-10,-15){\makebox(0.00,0.00)[t]{$#3$}}
}
\end{picture}
}

\begin{remark}
\label{treeremark}
The above calculations can be visualized by labelled
planar rooted trees. 
Representing the $\circ$-multiplication by a white vertex with two
inputs and one output, and the $\bullet$-multiplication by a similar black
vertex,
the associativity of $\circ$ and $\bullet$ can be depicted~as
\[
\raisebox{-2em}{}
\raisebox{3em}{}
\Btree\circ\circ xyz \hskip 1.8em =  \hskip 3em \Atree\circ\circ xyz \hskip -.6em 
\qquad \mbox {and} \qquad \hskip 1em 
\Btree\bullet\bullet xyz  \hskip 1.8em =  \hskip 3em \Atree\bullet\bullet xyz \hskip -.6em 
\]
while  rule~(e) reads
\[
\raisebox{-2em}{}
\raisebox{3em}{}
\Btree\bullet\circ xyz \hskip 1em = 0 \qquad \mbox {and} \qquad \hskip 3em 
\Atree\bullet\circ xyz  =  \hskip 3em \Atree\circ\bullet yxz \hskip -.6em .
\]
A pictorial verification of the compatibility of rule~(e) with equation \eqref{streda_v_Ostrave}
is shown in Figure~\ref{po_navratu_z_Ostravy}; 
the remaining (and in fact easier) cases can be verified similarly.
\end{remark}

\begin{figure}[ht]
\begin{center}
\unitlength=.8pt
\thicklines
\begin{picture}(60.00,240.00)(20.00,0.00)
\put(-55,180){
\put(30.00,20.00){\makebox(0.00,0.00){$\circ$}}
\put(60.00,03.00){\makebox(0.00,0.00)[t]{$w$}}
\put(40.00,3.00){\makebox(0.00,0.00)[t]{$z$}}
\put(20.00,3.00){\makebox(0.00,0.00)[t]{$y$}}
\put(0.00,3.00){\makebox(0.00,0.00)[t]{$x$}}
\put(40.00,30.00){\makebox(0.00,0.00){$\circ$}}
\put(30.00,40.00){\makebox(0.00,0.00){$\bullet$}}
\put(30.00,42.00){\vector(0,1){15}}
\put(42.00,28.00){\line(1,-1){18.00}}
\put(28.00,38.00){\line(-1,-1){28}}
\put(30.00,40.00){\rightl}
\put(30.00,20.00){\rightl}
\put(30.00,20.00){\leftl}
\put(40.00,30.00){\leftl}
}
\put(30,210.00){\makebox(0.00,0.00){$=$}}
\put(55,180){
\put(60.00,03.00){\makebox(0.00,0.00)[t]{$w$}}
\put(40.00,3.00){\makebox(0.00,0.00)[t]{$z$}}
\put(20.00,3.00){\makebox(0.00,0.00)[t]{$y$}}
\put(0.00,3.00){\makebox(0.00,0.00)[t]{$x$}}
\put(50.00,20.00){\makebox(0.00,0.00){$\circ$}}
\put(40.00,30.00){\makebox(0.00,0.00){$\circ$}}
\put(30.00,40.00){\makebox(0.00,0.00){$\bullet$}}
\put(30.00,42.00){\vector(0,1){15}}
\put(50.00,20.00){\leftl}
\put(38.00,28.00){\line(-1,-1){18.00}}
\multiput(30,40)(10,-10){3}{\put(0,0){\rightl}}
\put(28.00,38.00){\line(-1,-1){28.00}}
}
\put(110,90){
\put(60.00,03.00){\makebox(0.00,0.00)[t]{$w$}}
\put(40.00,3.00){\makebox(0.00,0.00)[t]{$z$}}
\put(20.00,3.00){\makebox(0.00,0.00)[t]{$x$}}
\put(0.00,3.00){\makebox(0.00,0.00)[t]{$y$}}
\put(50.00,20.00){\makebox(0.00,0.00){$\circ$}}
\put(40.00,30.00){\makebox(0.00,0.00){$\bullet$}}
\put(30.00,40.00){\makebox(0.00,0.00){$\circ$}}
\put(30.00,42.00){\vector(0,1){15}}
\put(50.00,20.00){\leftl}
\put(38.00,28.00){\line(-1,-1){18.00}}
\multiput(30,40)(10,-10){3}{\put(0,0){\rightl}}
\put(28.00,38.00){\line(-1,-1){28.00}}
}
\put(55,00){
\put(60.00,03.00){\makebox(0.00,0.00)[t]{$w$}}
\put(40.00,3.00){\makebox(0.00,0.00)[t]{$x$}}
\put(20.00,3.00){\makebox(0.00,0.00)[t]{$z$}}
\put(0.00,3.00){\makebox(0.00,0.00)[t]{$y$}}
\put(50.00,20.00){\makebox(0.00,0.00){$\bullet$}}
\put(40.00,30.00){\makebox(0.00,0.00){$\circ$}}
\put(30.00,40.00){\makebox(0.00,0.00){$\circ$}}
\put(30.00,42.00){\vector(0,1){15}}
\put(50.00,20.00){\leftl}
\put(38.00,28.00){\line(-1,-1){18.00}}
\multiput(30,40)(10,-10){3}{\put(0,0){\rightl}}
\put(28.00,38.00){\line(-1,-1){28.00}}
}
\put(-55,0){
\put(60.00,03.00){\makebox(0.00,0.00)[t]{$w$}}
\put(40.00,3.00){\makebox(0.00,0.00)[t]{$x$}}
\put(20.00,3.00){\makebox(0.00,0.00)[t]{$z$}}
\put(0.00,3.00){\makebox(0.00,0.00)[t]{$y$}}
\put(30.00,40.00){\makebox(0.00,0.00){$\circ$}}
\put(50.00,20.00){\makebox(0.00,0.00){$\bullet$}}
\put(50.00,20.00){\leftl}
\put(50.00,20.00){\rightl}
\put(10.00,20.00){\leftl}
\put(10.00,20.00){\rightl}
\put(10.00,20.00){\makebox(0.00,0.00){$\circ$}}
\put(30.00,42.00){\vector(0,1){15}}
\put(32.00,38.00){\line(1,-1){16.00}}
\put(28.00,38.00){\line(-1,-1){16.00}}
}
\put(30,40){\makebox(0.00,0.00){$=$}}
\put(115,70){\makebox(0.00,0.00){\rotatebox[origin=c]{-45}{$\Big\Downarrow$}}}
\put(115,160){\makebox(0.00,0.00){\rotatebox[origin=c]{45}{$\Big\Downarrow$}}}
\put(-25,120){\makebox(0.00,0.00){$\Big\Downarrow$}}
\end{picture} 
\end{center}
\caption{Tree diagrams for compatibility proof}
\label{po_navratu_z_Ostravy}
\end{figure}
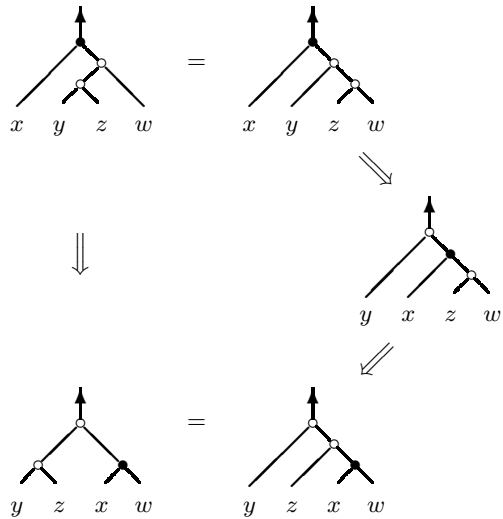
\end{example}

\begin{proof}[Proof of Theorem \ref{vcera_jsem_se_proti_vetru_nadrel}]
We use the same conventions regarding the notation for the $\circ$ and
$\bullet$ products as in the proof of Theorem
\ref{Pred_odjezdem_do_Ostravy}.  
The method for the symmetric case is essentially the same as for the nonsymmetric case, although
the matrices and the number of parameters are six times larger.
Let $\BB$ be the free symmetric operad generated by two binary operations denoted
$xy$ and $x \cdot y$.
We use the following ordered basis for $\BB(3)$ consisting of 48 monomials: 
\[
\begin{array}{l@{\qquad}l@{\qquad}l@{\qquad}l}
( x^\sigma y^\sigma ) z^\sigma, & 
x^\sigma ( y^\sigma z^\sigma ), &
( x^\sigma \cdot y^\sigma ) \cdot z^\sigma, & 
x^\sigma \cdot ( y^\sigma \cdot z^\sigma ), 
\\
x^\sigma y^\sigma \cdot z^\sigma, & 
x^\sigma \cdot y^\sigma z^\sigma, &
( x^\sigma \cdot y^\sigma ) z^\sigma, & 
x^\sigma ( y^\sigma \cdot z^\sigma ).
\end{array}
\]
The permutations $\sigma \in S_3$ permuting the arguments $x, y, z$ (not the positions) are in 
lexicographical order.
We identify quadratic relations with row vectors of coefficients with respect to this basis.
Consider the ideal $\mathbf{I} \subset \BB$ generated by the subspace 
$R = \mathbf{I}(3)$ which is the row space of the following block matrix:
\begin{equation}
\label{quadraticmatrix1}
[R]
=
\left[
\begin{array}{cccccccc}
I_6   & -I_6  & \cdot & \cdot & \cdot & \cdot & \cdot & \cdot \\
\cdot & \cdot &   I_6 &  -I_6 & \cdot & \cdot & \cdot & \cdot \\
\cdot & \cdot & \cdot & \cdot &   I_6 & \cdot &     A &     B \\
\cdot & \cdot & \cdot & \cdot & \cdot &   I_6 &     C &     D 
\end{array}
\right]
\end{equation}
We write $I_6$ and dot for the $6 \times 6$ identity and zero matrices, together with
\begin{equation}
\label{quadraticmatrix2}
A 
= 
\left[
\begin{array}{cccccc}
a_1 & a_2 & a_3 & a_4 & a_5 & a_6 \\[-3pt]
a_2 & a_1 & a_5 & a_6 & a_3 & a_4 \\[-3pt]
a_3 & a_4 & a_1 & a_2 & a_6 & a_5 \\[-3pt]
a_5 & a_6 & a_2 & a_1 & a_4 & a_3 \\[-3pt]
a_4 & a_3 & a_6 & a_5 & a_1 & a_2 \\[-3pt]
a_6 & a_5 & a_4 & a_3 & a_2 & a_1 
\end{array}
\right],
\end{equation}
and similarly for $B$, $C$ and $D$.
Thus $[R]$ contains 24 parameters.
We point out that rows 1, 7, 13, 19 generate the row space of $[R]$ as an $S_3$-module:
rows 1 and 7 represent associativity for operations $xy$ and $x \cdot y$; rows 13 and 19
represent the rewrite rules which show how to express a binary tree with operation $x \cdot y$ 
at the root as a linear combination of binary trees with operation $xy$ at the root:
\[
\begin{array}{l}
x y \cdot z +
a_1 ( x \cdot y ) z +
a_2 ( x \cdot z ) y +
a_3 ( y \cdot x ) z +
a_4 ( y \cdot z ) x +
a_5 ( z \cdot x ) y +
a_6 ( z \cdot y ) x 
\\[2pt]
\quad
{} +
b_1 x ( y \cdot z ) +
b_2 x ( z \cdot y ) +
b_3 y ( x \cdot z ) +
b_4 y ( z \cdot x ) +
b_5 z ( x \cdot y ) +
b_6 z ( y \cdot x )
\equiv 0,
\\[3pt]
x \cdot y z +
c_1 ( x \cdot y ) z +
c_2 ( x \cdot z ) y +
c_3 ( y \cdot x ) z +
c_4 ( y \cdot z ) x +
c_5 ( z \cdot x ) y +
c_6 ( z \cdot y ) x 
\\[2pt]
\quad
{} +
d_1 x ( y \cdot z ) +
d_2 x ( z \cdot y ) +
d_3 y ( x \cdot z ) +
d_4 y ( z \cdot x ) +
d_5 z ( x \cdot y ) +
d_6 z ( y \cdot x )
\equiv 0.
\end{array}
\]
Let $\rho(x,y,z)$ be the relation represented by one of the rows 1, 7, 13, 19.
Each of these four relations has ten cubic consequences as in equation \eqref{cubicconsequences},
for a total of 40 relations which generate the $S_4$-module 
$RR = \mathbf{I}(4) \subset \BB(4)$. 
Each of these 40 relations has 24 permutations, for a total of 960 relations which span $RR$
as a subspace of $\BB(4)$.
If we apply the 24 permutations of $w, x, y, z$ to the 40 nonsymmetric monomials in
equations \eqref{cubicmonomials1}-\eqref{cubicmonomials2} 
then we obtain 960 monomials which form an ordered basis 
of $\BB(4)$.
Thus we can represent $RR$ as the row space of a $960 \times 960$ matrix $[RR]$ whose entries
belong to
\[
\{ 0, \pm 1 \} \cup X, 
\qquad \text{where} \qquad
X = \{ a_k, b_k, c_k, d_k \mid 1 \le k \le 6 \}.
\]
Thus $[RR]$ may be regarded as a matrix over
the polynomial ring ${\bfk}[X]$ with 24 variables.
As in the nonsymmetric case, we compute a partial Smith form for $[RR]$ and 
obtain a~block diagonal matrix $\mathrm{diag}( I_{768}, L )$ 
where $L$ has size $192 \times 192$ and contains no nonzero scalar entries.
The set of nonzero entries of $L$ contains 575 polynomials, all of which have total degree 1 or 2
in the variables $X$.
From this large set of ideal generators we obtain a~deglex Gr\"obner basis of only 28 polynomials:
\[
\begin{array}{l}
a_1, \;\; 
a_3, \;\; 
a_4, \;\; 
a_5, \;\; 
a_6, \;\; 
b_2, \;\; 
b_3, \;\; 
b_4, \;\; 
b_5, \;\; 
b_6, \;\; 
c_2, \;\; 
c_3, \;\; 
c_4, \;\; 
c_5, \;\; 
c_6, \;\; 
d_1, \;\; 
d_2, \;\; 
d_4, \;\; 
d_5, \;\; 
d_6, \;\; 
\\[2pt]
a_2^2 + a_2, \;\; 
a_2 b_1, \;\; 
a_2 c_1, \;\; 
b_1^2 + b_1, \;\; 
b_1 d_3, \;\; 
c_1^2 + c_1, \;\; 
c_1 d_3, \;\; 
d_3^2 + d_3.
\end{array}
\]
From this we easily determine that the ideal is zero-dimensional and that its zero set consists of
the following seven points:
\[
\begin{array}{
r@{\,\,}|
r@{\,}r@{\,}r@{\,}r@{\,}r@{\,}r@{\,\,}|@{\,}r@{\,}r@{\,}r@{\,}r@{\,}r@{\,}r@{\,\,}|@{\,}
r@{\,}r@{\,}r@{\,}r@{\,}r@{\,}r@{\,\,}|@{\,}r@{\,}r@{\,}r@{\,}r@{\,}r@{\,}r@{\,\,}|
}
&
a_1 & a_2 & a_3 & a_4 & a_5 & a_6 & b_1 & b_2 & b_3 & b_4 & b_5 & b_6 & 
c_1 & c_2 & c_3 & c_4 & c_5 & c_6 & d_1 & d_2 & d_3 & d_4 & d_5 & d_6 \\
\midrule
1 &
\cdot &  \cdot & \cdot & \cdot & \cdot & \cdot &  \cdot & \cdot & \cdot & \cdot & \cdot & \cdot &  \cdot & \cdot & \cdot & \cdot & \cdot & \cdot & \cdot & \cdot &  \cdot & \cdot & \cdot & \cdot \\[-2pt]
2 &
\cdot &  \cdot & \cdot & \cdot & \cdot & \cdot &  \cdot & \cdot & \cdot & \cdot & \cdot & \cdot & -1 & \cdot & \cdot & \cdot & \cdot & \cdot & \cdot & \cdot &  \cdot & \cdot & \cdot & \cdot \\[-2pt]
3 &
\cdot &  \cdot & \cdot & \cdot & \cdot & \cdot & -1 & \cdot & \cdot & \cdot & \cdot & \cdot &  \cdot & \cdot & \cdot & \cdot & \cdot & \cdot & \cdot & \cdot &  \cdot & \cdot & \cdot & \cdot \\[-2pt]
4 &
\cdot &  \cdot & \cdot & \cdot & \cdot & \cdot & -1 & \cdot & \cdot & \cdot & \cdot & \cdot & -1 & \cdot & \cdot & \cdot & \cdot & \cdot & \cdot & \cdot &  \cdot & \cdot & \cdot & \cdot \\
\midrule
5 &
\cdot &  \cdot & \cdot & \cdot & \cdot & \cdot &  \cdot & \cdot & \cdot & \cdot & \cdot & \cdot &  \cdot & \cdot & \cdot & \cdot & \cdot & \cdot & \cdot & \cdot & -1 & \cdot & \cdot & \cdot \\[-2pt]
6 &
\cdot & -1 & \cdot & \cdot & \cdot & \cdot &  \cdot & \cdot & \cdot & \cdot & \cdot & \cdot &  \cdot & \cdot & \cdot & \cdot & \cdot & \cdot & \cdot & \cdot &  \cdot & \cdot & \cdot & \cdot \\[-2pt]
7 &
\cdot & -1 & \cdot & \cdot & \cdot & \cdot &  \cdot & \cdot & \cdot & \cdot & \cdot & \cdot &  \cdot & \cdot & \cdot & \cdot & \cdot & \cdot & \cdot & \cdot & -1 & \cdot & \cdot & \cdot \\
\midrule
\end{array}
\]
For points 1--4, the matrices $A, B, C, D$ from equations 
\eqref{quadraticmatrix1}-\eqref{quadraticmatrix2}
are as follows:
\[
\left[
\begin{array}{cc}
A & B \\
C & D 
\end{array}
\right]
=
\left[
\begin{array}{cc}
0 & 0 \\
0 & 0 
\end{array}
\right],
\;
\left[
\begin{array}{cc}
   0 & 0 \\
-I_6 & 0 
\end{array}
\right],
\;
\left[
\begin{array}{cc}
0 & -I_6 \\
0 &    0 
\end{array}
\right],
\;
\left[
\begin{array}{cc}
   0 & -I_6 \\
-I_6 &    0 
\end{array}
\right].
\]
The corresponding distributive laws are simply the symmetrizations of the four laws from 
the nonsymmetric case.
Points (5)-(7) give new symmetric distributive laws which have no analogue in the nonsymmetric case.
Consider these (negative) permutation~\hbox{matrices:}
\[
P = 
\left[ 
\begin{array}{r@{\;\;}r@{\;\;}r@{\;\;}r@{\;\;}r@{\;\;}r}
 \cdot & -1 &  \cdot &  \cdot &  \cdot &  \cdot \\[-2pt]
-1 &  \cdot &  \cdot &  \cdot &  \cdot &  \cdot \\[-2pt]
 \cdot &  \cdot &  \cdot & -1 &  \cdot &  \cdot \\[-2pt]
 \cdot &  \cdot & -1 &  \cdot &  \cdot &  \cdot \\[-2pt]
 \cdot &  \cdot &  \cdot &  \cdot &  \cdot & -1 \\[-2pt]
 \cdot &  \cdot &  \cdot &  \cdot & -1 &  \cdot 
\end{array}
\right],
\qquad
Q = 
\left[ 
\begin{array}{r@{\;\;}r@{\;\;}r@{\;\;}r@{\;\;}r@{\;\;}r}
 \cdot &  \cdot & -1 &  \cdot &  \cdot &  \cdot \\[-2pt]
 \cdot &  \cdot &  \cdot &  \cdot & -1 &  \cdot \\[-2pt]
-1 &  \cdot &  \cdot &  \cdot &  \cdot &  \cdot \\[-2pt]
 \cdot &  \cdot &  \cdot &  \cdot &  \cdot & -1 \\[-2pt]
 \cdot & -1 &  \cdot &  \cdot &  \cdot &  \cdot \\[-2pt]
 \cdot &  \cdot &  \cdot & -1 &  \cdot &  \cdot 
\end{array}
\right].
\]
Then points 5--7 correspond to 
\[
\left[
\begin{array}{cc}
A & B \\
C & D 
\end{array}
\right]
=
\left[
\begin{array}{cc}
0 & 0 \\
0 & Q 
\end{array}
\right],
\;
\left[
\begin{array}{cc}
P & 0 \\
0 & 0 
\end{array}
\right],
\;
\left[
\begin{array}{cc}
P & 0 \\
0 & Q 
\end{array}
\right].
\]
These solutions correspond respectively to (all permutations of) 
these rewrite rules:
\[
\begin{array}{lll}
5\colon
&\quad 
xy \cdot z \,\longrightarrow\, 0, 
&\quad
x \cdot yz \,\longrightarrow\, y ( x \cdot z )
\\
6\colon 
&\quad 
xy \cdot z \,\longrightarrow\, ( x \cdot z ) y,
&\quad
x \cdot yz \,\longrightarrow\, 0
\\
7\colon
&\quad 
xy \cdot z \,\longrightarrow\, ( x \cdot z ) y,
&\quad
x \cdot yz \,\longrightarrow\, y ( x \cdot z ).
\end{array}
\]
These are the three remaining distributive laws of Theorem~\ref{vcera_jsem_se_proti_vetru_nadrel}.
\end{proof}


\section{Distributive laws $\dl\Com\Ass$}

\begin{theorem}
The only distributive law  $\dl\Com\Ass$ is the trivial one.
\end{theorem}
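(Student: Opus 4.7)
The plan is to follow the same computational methodology used in the proof of Theorem~\ref{Zitra_Duac_v_Koline.}. Write $\cdot$ for the commutative associative generator of $\Com$ and $*$ for the associative generator of $\Ass$. The first step is to identify the space of candidate distributive laws. Since $*$ is non-commutative, the element $(x * y) \cdot z$ has trivial stabiliser in $\Sigma_3$, so $\calf(E)(3)_{12}$ is the regular $\Sigma_3$-representation of dimension $6$. The space $\calf(E)(3)_{21}$ is also $6$-dimensional, isomorphic to $\mathrm{Ind}_{\Sigma_2}^{\Sigma_3}(\mathbf{1})^{\oplus 2}$ (the two summands coming from the two shapes $(u \cdot v) * w$ and $w * (u \cdot v)$, whose stabilisers $\{e, (12)\}$ reflect commutativity of $\cdot$). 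A $\Sigma_3$-equivariant map $\cald$ is therefore specified by six parameters via
\begin{equation*}
\cald\bigl((x * y) \cdot z\bigr) = a_1 (x \cdot y) * z + a_2 (x \cdot z) * y + a_3 (y \cdot z) * x + a_4 z * (x \cdot y) + a_5 y * (x \cdot z) + a_6 x * (y \cdot z),
\end{equation*}
with all other images of $\cald$ determined by the $\Sigma_3$-action.

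Next, by Theorem~\ref{turmo} it suffices to verify that $\xi(4)$ is an isomorphism. The map $\xi(4)$ is surjective for any $\cald$, and the dimension of its domain $\bigoplus_{l = 1}^{4} \oP(4)_l$ equals the number of ordered set partitions of $\{1, 2, 3, 4\}$ into $l$ non-empty blocks, namely $1 + 14 + 36 + 24 = 75$. So the task is to show that the equation $\dim \oP(4) = 75$ forces $a_1 = \cdots = a_6 = 0$. Concretely, I would build the matrix $[RR]$ whose rows span the arity-$4$ ideal generated by the four quadratic relations --- associativity of $\cdot$, associativity of $*$, and the rewrite rule $\cald$ together with its $\Sigma_3$-translate applied to $x \cdot (y * z)$ --- expressed in a chosen monomial basis of $\calf(E)(4)$. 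The entries of $[RR]$ then lie in the polynomial ring $\bfk[a_1, \dots, a_6]$, and a partial Smith-form reduction as in Section~\ref{Dnes_jsem_letal_na_205ce.} converts $[RR]$ into block-diagonal form $\mathrm{diag}(I_k, L)$ where $L$ contains no non-zero scalar entries.

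Finally, the non-zero entries of $L$ generate an ideal $J \subset \bfk[a_1, \dots, a_6]$ whose zero set parameterises the genuine distributive laws, and I would compute a Gr\"obner basis of $J$ with respect to a convenient monomial order. The theorem then reduces to the assertion that this Gr\"obner basis equals $\{a_1, \dots, a_6\}$, forcing $\cald = 0$. The main obstacle is the size of the calculation: the arity-$4$ basis of $\calf(E)$ for mixed commutative/non-commutative generators is substantially larger than in the $\dl\Ass\Ass$ case, and the parameter ring now has six variables rather than four; the Gr\"obner basis computation is feasible only with the computer assistance (Maple's \texttt{LinearAlgebra} and \texttt{Groebner} packages) referenced in the abstract. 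Conceptually, one should expect such rigidity: commutativity of $\cdot$ forces the six parameters $a_i$ to enter symmetrically into the four-variable consequences of the $*$-associativity, and these symmetric constraints, amplified by the $\Sigma_4$-action, are too restrictive to admit any non-trivial solution.
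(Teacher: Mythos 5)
Your proposal follows exactly the paper's method: parametrize the equivariant map $\cald$ by six scalars (the regular representation $\calf(E)(3)_{12}$ mapping to the six-dimensional $\calf(E)(3)_{21}$), reduce to arity $4$ via Theorem~\ref{turmo}, assemble the matrix of cubic consequences over $\bfk[a_1,\dots,a_6]$, take a partial Smith form, and show that the ideal generated by the entries of the residual block has Gr\"obner basis $\{a_1,\dots,a_6\}$ --- and your dimension count $1+14+36+24=75$ for the target of $\xi(4)$ agrees with the paper's $405-330=75$. The only cosmetic difference is that the ``$\Sigma_3$-translate applied to $x\cdot(y*z)$'' is redundant, since $x\cdot(y*z)=(y*z)\cdot x$ already lies in the $\Sigma_3$-orbit of the generator, so the paper works with three generating quadratic relations rather than four; otherwise the setup, and the deferral of the final Gr\"obner computation to Maple, match the paper's proof.
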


\begin{proof}
We write $ab$ for the associative operation, and $a \cdot b$ for the
commutative associative operation.
Commutativity implies that we need to consider only six association types in arity 3,
which we order as follows:
\[
{\ast} \cdot {\ast} \cdot {\ast} = ( {\ast} \cdot {\ast} ) \cdot {\ast}, 
\qquad 
( {\ast} {\ast} ) \cdot {\ast},  
\qquad 
( {\ast} \cdot {\ast} ) {\ast},  
\qquad 
{\ast}{\ast}{\ast} = ( {\ast} {\ast} ) {\ast},  
\qquad 
{\ast} ( {\ast} \cdot {\ast} ), 
\qquad 
{\ast} ( {\ast} {\ast} ). 
\]
Similarly, we need consider only 25 association types in arity 4;
in the following ordered list we include all the parentheses:
\[
\begin{array}{l@{\quad\;\;}l@{\quad\;\;}l@{\quad\;\;}l@{\quad\;\;}l}
( ( {\ast} \cdot {\ast} ) \cdot {\ast} ) \cdot {\ast}, &  
( ( {\ast} {\ast} ) \cdot {\ast} ) \cdot {\ast}, &  
( ( {\ast} \cdot {\ast} ) {\ast} ) \cdot {\ast}, &  
( ( {\ast} {\ast} ) {\ast} ) \cdot {\ast}, &  
( {\ast} ( {\ast} \cdot {\ast} ) ) \cdot {\ast}, \\  
( {\ast} ( {\ast} {\ast} ) ) \cdot {\ast}, &  
( {\ast} \cdot {\ast} ) \cdot ( {\ast} \cdot {\ast} ), &  
( {\ast} \cdot {\ast} ) \cdot ( {\ast} {\ast} ), &  
( {\ast} {\ast} ) \cdot ( {\ast} {\ast} ), &  
( ( {\ast} \cdot {\ast} ) \cdot {\ast} ) {\ast}, \\  
( ( {\ast} {\ast} ) \cdot {\ast} ) {\ast}, &  
( ( {\ast} \cdot {\ast} ) {\ast} ) {\ast}, &  
( ( {\ast} {\ast} ) {\ast} ) {\ast}, &  
( {\ast} ( {\ast} \cdot {\ast} ) ) {\ast}, &  
( {\ast} ( {\ast} {\ast} ) ) {\ast}, \\  
( {\ast} \cdot {\ast} ) ( {\ast} \cdot {\ast} ), &  
( {\ast} \cdot {\ast} ) ( {\ast} {\ast} ), &  
( {\ast} {\ast} ) ( {\ast} \cdot {\ast} ), &  
( {\ast} {\ast} ) ( {\ast} {\ast} ), &  
{\ast} ( ( {\ast} \cdot {\ast} ) \cdot {\ast} ), \\  
{\ast} ( ( {\ast} {\ast} ) \cdot {\ast} ), &  
{\ast} ( ( {\ast} \cdot {\ast} ) {\ast} ), &  
{\ast} ( ( {\ast} {\ast} ) {\ast} ), &  
{\ast} ( {\ast} ( {\ast} \cdot {\ast} ) ), &  
{\ast} ( {\ast} ( {\ast} {\ast} ) ). 
\end{array} 
\]
The number of distinct association types for a sequence of $n$ arguments
with two associative binary operations, one commutative and one noncommutative, 
is sequence A276277 in the Online Encyclopedia of Integer Sequences (\url{oeis.org}):
\[
1, \; 2, \; 6, \; 25, \; 111, \; 540, \; 2736, \; 14396, \; 77649, \; 427608, \; 2392866, \; 13570386, \; 77815161, \; \dots
\]
Applying all permutations to the arguments, and ignoring duplications which follow from commutativity,
we obtain 27 distinct multilinear monomials in arity 3, ordered as follows:
\[
\begin{array}
{l@{\;\;}l@{\;\;}l@{\;\;}l@{\;\;}l@{\;\;}l@{\;\;}l@{\;\;}l@{\;\;}l}
( a \cdot b ) \cdot c, &  
( a \cdot c ) \cdot b, &  
( b \cdot c ) \cdot a, &  
( a b ) \cdot c, &  
( a c ) \cdot b, &  
( b a ) \cdot c, &  
( b c ) \cdot a, &  
( c a ) \cdot b, &  
( c b ) \cdot a, \\  
( a \cdot b ) c, &  
( a \cdot c ) b, &  
( b \cdot c ) a, &  
( a b ) c, &  
( a c ) b, &  
( b a ) c, &  
( b c ) a, &  
( c a ) b, &  
( c b ) a, \\  
a ( b \cdot c ), &  
b ( a \cdot c ), &  
c ( a \cdot b ), &  
a ( b c ), &  
a ( c b ), &  
b ( a c ), &  
b ( c a ), &  
c ( a b ), &  
c ( b a ).  
\end{array}
\]
Similarly, we obtain 405 distinct multilinear monomials of arity 4.
The number of distinct multilinear monomials with two associative binary operations, 
one commutative and one noncommutative, 
is the sextuple factorials, sequence A011781 in the OEIS:
\[
\prod_{k=0}^{n-1} \, (6k{+}3)
=
1, \; 3, \; 27, \; 405, \; 8505, \; 229635, \; 7577955, \; 295540245, \; 13299311025, \; \dots
\]
Figure \ref{asscomquarelmat} displays the matrix whose row space is the $S_3$-submodule
generated by three quadratic relations:
associativity for $ab$, 
associativity for $a \cdot b$, and 
the relation expressing the reduction of a monomial of the form $(ab) \cdot c$ to a linear combination 
of permutations of the monomial $(a \cdot b) c$. 

\begin{figure}[ht]
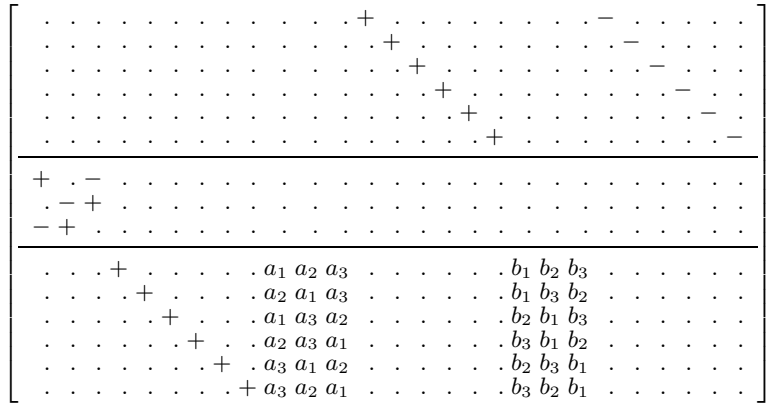

$
\left[
\begin{array}
{
r@{\;}r@{\;}r@{\;}r@{\;}r@{\;}r@{\;}r@{\;}r@{\;}r@{\;}
r@{\;}r@{\;}r@{\;}r@{\;}r@{\;}r@{\;}r@{\;}r@{\;}r@{\;}
r@{\;}r@{\;}r@{\;}r@{\;}r@{\;}r@{\;}r@{\;}r@{\;}r
}
 .& .& .& .& .& .& .& .& .& .& .& .& +& .& .& .& .& .& .& .& .& -& .& .& .& .& .\\[-2pt]
 .& .& .& .& .& .& .& .& .& .& .& .& .& +& .& .& .& .& .& .& .& .& -& .& .& .& .\\[-2pt]
 .& .& .& .& .& .& .& .& .& .& .& .& .& .& +& .& .& .& .& .& .& .& .& -& .& .& .\\[-2pt]
 .& .& .& .& .& .& .& .& .& .& .& .& .& .& .& +& .& .& .& .& .& .& .& .& -& .& .\\[-2pt]
 .& .& .& .& .& .& .& .& .& .& .& .& .& .& .& .& +& .& .& .& .& .& .& .& .& -& .\\[-2pt]
 .& .& .& .& .& .& .& .& .& .& .& .& .& .& .& .& .& +& .& .& .& .& .& .& .& .& -\\\midrule
 +& .& -& .& .& .& .& .& .& .& .& .& .& .& .& .& .& .& .& .& .& .& .& .& .& .& .\\[-2pt]
 .& -& +& .& .& .& .& .& .& .& .& .& .& .& .& .& .& .& .& .& .& .& .& .& .& .& .\\[-2pt]
 -& +& .& .& .& .& .& .& .& .& .& .& .& .& .& .& .& .& .& .& .& .& .& .& .& .& .\\\midrule
 .& .& .& +& .& .& .& .& .&a_1&a_2&a_3& .& .& .& .& .& .&b_1&b_2&b_3& .& .& .& .& .& .\\[-2pt]
 .& .& .& .& +& .& .& .& .&a_2&a_1&a_3& .& .& .& .& .& .&b_1&b_3&b_2& .& .& .& .& .& .\\[-2pt]
 .& .& .& .& .& +& .& .& .&a_1&a_3&a_2& .& .& .& .& .& .&b_2&b_1&b_3& .& .& .& .& .& .\\[-2pt]
 .& .& .& .& .& .& +& .& .&a_2&a_3&a_1& .& .& .& .& .& .&b_3&b_1&b_2& .& .& .& .& .& .\\[-2pt]
 .& .& .& .& .& .& .& +& .&a_3&a_1&a_2& .& .& .& .& .& .&b_2&b_3&b_1& .& .& .& .& .& .\\[-2pt]
 .& .& .& .& .& .& .& .& +&a_3&a_2&a_1& .& .& .& .& .& .&b_3&b_2&b_1& .& .& .& .& .& .
 \end{array}
\right]
$
\vspace{-2mm}
\caption{Associative-commutative quadratic relation matrix}
\label{asscomquarelmat}
\end{figure}

The $S_4$-module generated by the consequences of the three quadratic relations has size
$540 \times 405$.
Its partial Smith form consists of an identity matrix of size 330 and a lower right block
$L$ of size $210 \times 75$.
The matrix $L$ contains 56 distinct nonzero polynomials of degrees 1 and 2; replacing each
by its monic form gives the following 43 polynomials:
\[
\begin{array}{l}
a_3, \; b_2, \; a_2^2, \; a_3^2, \; b_1^2, \; b_2^2, \; a_1 b_3, \; 
a_1 (a_2 {+} 1), \; a_1 (a_1 {+} a_2 {+} a_3 {+} b_1 {+} b_2 {+} b_3), \; a_2 a_1, \; a_2 a_3, \; a_2 b_2, \; 
\\[1pt]
a_2 (a_2 {+} 1), \; a_2 (a_3 {+} b_2), \; 
a_3 a_1, \; a_3 b_1, \; a_3 (a_2 {+} b_1 {+} 1), \; b_1 b_2, \; b_1 b_3, \; b_1 (a_3 {+} b_2), \; b_1 (b_1 {+} 1), \; 
\\[1pt]
b_2 b_3, \; 
b_2 (a_2 {+} b_1 {+} 1), \; b_3 (b_1 {+} 1), \; b_3 (a_1 {+} a_2 {+} a_3 {+} b_1 {+} b_2 {+} b_3), \; 
a_1 (a_2 {-} a_1), \; a_1 (a_3 {-} a_1), \; 
\\[1pt]
a_1 (a_3 {-} a_2), \; 
a_1 (b_2 {-} b_1), \; b_3 (a_3 {-} a_2), \; b_3 (b_2 {-} b_1), \; b_3 (b_3 {-} b_1), \; b_3 (b_3 {-} b_2), \; 
a_2 b_1 {+} a_3^2, \; 
\\[1pt]
a_2 b_1 {+} b_2^2, \; 
a_1 a_2 {+} a_3 b_3, \; a_1 b_1 {+} a_2 b_3, \; a_1 b_2 {+} a_3 b_3, \; a_1 b_2 {+} b_1 b_3, \; 
a_2^2 {+} a_3 b_2 {+} a_2, \; a_3 b_2 {+} b_1^2 {+} b_1, \; 
\\[1pt]
a_1 a_3 {+} a_2 b_3 {+} b_3, \; a_1 b_1 {+} b_2 b_3 {+} a_1.
\end{array}
\]
One easily verifies that the deglex Gr\"obner basis for the ideal generated by these polynomials 
consists of the six variables $a_1, a_2, a_3, b_1, b_2, b_3$
and this completes the proof.
\end{proof}


\section{Distributive laws $\dl\Lie\Ass$}

The methods in this case are very similar to the case $\dl\Com\Ass$
except that instead of a commutative associative operation we have a Lie bracket:
an anticommutative operation satisfying the Jacobi identity.
This requires keeping track of sign changes that occur as a result of anticommutativity 
when calculating normal forms of the monomials in consequences and permutations 
of various quadratic and cubic relations.

\begin{theorem}
The only distributive law  $\dl\Lie\Ass$ is the trivial one.
By Koszul duality, the same conclusion holds for  $\dl\Ass\Com$.
\end{theorem}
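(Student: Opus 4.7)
The plan is to mirror the strategy used for $\dl\Com\Ass$, replacing the commutative associative operation $a\cdot b$ with a Lie bracket $[a,b]$. Let $\BB$ denote the free symmetric operad generated by two binary operations $ab$ and $[a,b]$. The quadratic relations we impose are (i) associativity of $ab$, (ii) anticommutativity $[a,b]+[b,a]\equiv 0$, (iii) the Jacobi identity $[[a,b],c]+[[b,c],a]+[[c,a],b]\equiv 0$, and (iv) the most general $\Sigma_3$-equivariant rewrite rule
\[
[ab,c] \;\longrightarrow\; a_1\,[a,b]\,c + a_2\,[a,c]\,b + a_3\,[b,c]\,a + b_1\,a\,[b,c] + b_2\,b\,[a,c] + b_3\,c\,[a,b],
\]
depending on six parameters $a_1,a_2,a_3,b_1,b_2,b_3\in\bfk$. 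Here anticommutativity has already been used to collapse the twelve candidate monomials on the right to a spanning set of six, and $\Sigma_3$-equivariance then propagates the rule to the remaining generators of $\calf(E)(3)_{12}$.

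Following the template of the $\dl\Com\Ass$ case, I would first fix a total order on $\{w,x,y,z\}$ and select a normal-form basis of multilinear monomials in arity~$4$, reducing every Lie bracket so that its lexicographically smaller sub-argument occupies the first slot, with the induced signs recorded. The ten cubic consequences of (i)--(iv) of the form~\eqref{cubicconsequences}, together with their $\Sigma_4$-translates, would then be assembled into a large sparse matrix $[RR]$ over the polynomial ring $\bfk[a_1,\ldots,b_3]$. By Theorem~\ref{turmo}, $\cald$ defines a distributive law if and only if a partial Smith form of $[RR]$ reduces to $\mathrm{diag}(I_N,L)$ with $L\equiv 0$; the nonzero entries of $L$ generate an ideal $J\subseteq\bfk[a_1,\ldots,b_3]$, and the conclusion we expect to extract is that a deglex Gr\"obner basis of $J$ contains each of the six variables, forcing the trivial solution $\cald\equiv 0$.

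The main obstacle, absent from the $\dl\Com\Ass$ case, is the careful bookkeeping of sign flips caused by anticommutativity during the reduction of each cubic consequence to normal form, and the elimination of triply nested bracket monomials such as $[[u,v],[w,x]]$ and $[[[u,v],w],x]$ via the Jacobi identity: a single sign error could either admit spurious nontrivial distributive laws or eliminate a legitimate one. Once $[RR]$ is correctly assembled, the remainder is a routine Maple computation along the lines of the preceding case.

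Finally, the statement for $\dl\Ass\Com$ requires no further work: by Lemma~\ref{duallemma} the Koszul dualization $\cald\mapsto\cald^!$ is a bijection between (homogeneous quadratic) distributive laws $\dl\Lie\Ass$ and $\dl{\Ass^!}{\Lie^!}=\dl\Ass\Com$, and since the trivial law is evidently its own Koszul dual, the unique distributive law $\dl\Lie\Ass$ corresponds to the unique (and trivial) distributive law $\dl\Ass\Com$.
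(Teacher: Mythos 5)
Your proposal is correct and follows essentially the same route as the paper: the paper likewise treats $\dl\Lie\Ass$ by repeating the $\dl\Com\Ass$ computation with a six-parameter rewrite rule, the extra sign bookkeeping forced by anticommutativity and the Jacobi identity, a partial Smith form of the arity-4 consequence matrix, and a Gr\"obner basis that returns exactly the six parameters, while the $\dl\Ass\Com$ statement is deduced from Lemma~\ref{duallemma} just as you do. The only (immaterial) difference is that the paper keeps Jacobi consequences as rows of $[RR]$ rather than pre-reducing the column basis by Jacobi, but the two implementations compute the same codimension.
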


\begin{nonexample}
One is tempted to relax the commutativity of the associative multiplication of Poisson algebras, 
keeping other axioms unchanged, as done e.g.~in~\cite{AM}. 
We show that in this case the derivation rule~(\ref{derivationlaw}) does not define 
a distributive law $\dl\Lie\Ass$,
so we suspect that these na\"ive noncommutative Poisson algebras are ill-behaved. 
More specifically, we show that the rule~(\ref{derivationlaw}) 
is not compatible with the anticommutativity of~$[-,-]$. 
Let us consider the equation
\begin{equation}
\label{ten_silenej_Cinan_je_tady}
[ab,cd] = -[cd,ab].
\end{equation}
Expanding its left side using \eqref{derivationlaw} twice gives
\begin{align*}
[ab,cd] &= a[b,cd] +  [a,cd]b =  ac[b,d] +  a[b,c]d + c[a,d]b + [a,c]db,
\end{align*}
while the right side results in
\begin{align*}
-[cd,ab] =  -c[d,ab] - [c,ab]d
&= 
- ca[d,b] - c[d,a]b - a[c,b]d - [c,a]bd
\\
&=
ca[b,d] + c[a,d]b + a[b,c]d + [a,c]bd.
\end{align*}
The compatibility of~(\ref{derivationlaw}) with~(\ref{ten_silenej_Cinan_je_tady}) 
would require the equality
\[
{ac[b,d]}  + c[a,d]b +  a[b,c]d + {[a,c]db} = 
\\
{ca[b,d]} + c[a,d]b + a[b,c]d + {[a,c]bd},
\]
which is the same as
\[
(ac-ca)[b,d] + [a,c](db-bd) = 0.
\]
One however cannot expect this to be true in general unless $ac=ca$ and $db=bd$.
If we denote the commutator of the associative multiplication by $\{-,-\}$ then we obtain
\begin{equation}
\label{Prinutim_se_jet_na_kole?}
\{a,c\}[b,d] = [a,c]\{b,d\},
\end{equation}
which can be found e.g.~in~\cite[Lemma~1.1]{voronov_th} or~\cite[Theorem~1]{vr}.
Theodore Voronov informed us, referring to a rare 1932
book\footnote{Cf. formula~(14), page 41, of the second
  edition~\cite{Fock} of that book.} 
by Fok, that~(\ref{Prinutim_se_jet_na_kole?}) was
first obtained by Dirac, who used it to motivate his argument
that in quantum mechanics, the `quantum Poisson bracket' has 
to be proportional to the commutator of the operators.  
\end{nonexample}

\begin{remark}
\label{Dnes_vecer_s_Andulkou_na_veceri.}
We advise the reader that there are other structures called
`noncommutative Poisson algebras' in the literature.
The structure in \cite{Kubo1,Kubo2} combines Leibniz and associative algebras
via the derivation rule \eqref{derivationlaw}; it is therefore of type  $\dl\Lei\Ass$. 
The structure in \cite{CEEY} is defined as a Poisson algebra on the abelization $A/[A,A]$  
of an associative algebra $A$. 
Other generalizations include 
double Poisson algebras \cite{T,V} equipped with a `double bracket' $A \otimes A \to
A \otimes A$, or a twisted version in the physics paper~\cite{RV}.
\end{remark}


\section{The remaining cases}

In this section we analyze the remaining three types of distributive
laws between the Three Graces.

\begin{theorem}
\label{comcomtheorem}
For $\dl\Com\Com$ we obtain only the trivial distributive law.
\end{theorem}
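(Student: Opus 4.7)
The plan is to reduce the problem to a two-parameter family of candidate rewrite rules and then close the system using a single well-chosen critical pair in arity~$4$. Write $\circ$ and $\bullet$ for the two copies of the commutative associative multiplication. The space $\calf(E)(3)_{12}$ has the three-element basis obtained by permuting the inputs of $(x \circ y) \bullet z$, and $\calf(E)(3)_{21}$ the analogous basis built from $(x \bullet y) \circ z$; as $\Sigma_3$-modules both copies are isomorphic to the permutation representation $\mathrm{triv} \oplus \mathrm{std}$. Schur's lemma therefore gives a two-dimensional moduli of $\Sigma_3$-equivariant maps $\calf(E)(3)_{12} \to \calf(E)(3)_{21}$, and the symmetry of the source monomial in its first two arguments (by commutativity of $\circ$) pins the rule down to the form
\[
\cald\colon\ (x \circ y) \bullet z \;\longmapsto\; \alpha\, (x \bullet y) \circ z \,+\, \beta\bigl[(x \bullet z) \circ y + (y \bullet z) \circ x\bigr],
\]
for scalars $\alpha, \beta \in \bfk$.

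By Theorem~\ref{turmo}, the distributive-law condition reduces to coherence in arity~$4$. Rather than building the full $\xi(4)$ matrix (as in the proof of Theorem~\ref{Pred_odjezdem_do_Ostravy}), I would extract the critical pair coming from the associativity of $\bullet$, namely
\[
\bigl((x \circ y) \bullet z\bigr) \bullet w \;=\; (x \circ y) \bullet (z \bullet w),
\]
and rewrite both sides into the canonical form in which the root is a $\circ$-comb whose leaves are $\bullet$-products of the original variables. Comparing coefficients monomial by monomial, one finds that the normal-form monomial $(y \bullet w) \circ (x \bullet z)$ acquires coefficient $\beta^2$ on the left and $0$ on the right, forcing $\beta = 0$; substituting $\beta = 0$ into the equation arising from the coefficient of $(x \bullet y) \circ (z \bullet w)$, which reads $\alpha \beta - \alpha = 0$, then forces $\alpha = 0$. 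Consequently $\cald = 0$, which is the trivial distributive law.

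I do not expect a significant obstacle: the parametrization rests only on Schur's lemma and the two commutativities, and the critical-pair calculation is short. The one subtle point is the bookkeeping of the built-in symmetries when expanding compositions --- for instance, using $x \bullet (z \circ w) = (z \circ w) \bullet x$ before applying the rewrite rule, and recognizing $(x \bullet y) \circ (z \bullet w) = (z \bullet w) \circ (x \bullet y)$ as a single normal form. As a sanity check, one can alternatively run the full partial Smith form and Gr\"obner basis computation in the spirit of Theorems~\ref{Pred_odjezdem_do_Ostravy}--\ref{vcera_jsem_se_proti_vetru_nadrel}; with only two parameters, the resulting obstruction ideal has Gr\"obner basis $\{\alpha, \beta\}$ and its zero set is the origin, confirming that only the trivial law exists.
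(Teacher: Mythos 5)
Your proof is correct, and it takes a genuinely different route from the paper's. The paper proceeds by brute force: it writes the candidate rewrite rule with three coefficients $x_1,x_2,x_3$ (without first imposing the $\Sigma_3$-equivariance that would force the coefficients of $(ac)\cdot b$ and $(bc)\cdot a$ to coincide), assembles the full $360\times 120$ matrix of arity-$4$ consequences, and extracts a partial Smith form whose obstruction ideal has Gr\"obner basis $x_1,x_2,x_3$ --- a computation delegated to Maple. You instead use Schur's lemma on the permutation module $\mathrm{triv}\oplus\mathrm{std}$ to cut the moduli of candidate laws down to two parameters from the start, and then observe that a single critical pair --- the overlap of $\cald$ with the associativity of $\bullet$ --- already forces $\beta=0$ (coefficient $\beta^2$ versus $0$ on the monomial $(x\bullet z)\circ(y\bullet w)$) and then $\alpha=0$ (coefficient $\alpha\beta-\alpha$ on $(x\bullet y)\circ(z\bullet w)$); I have expanded both sides and these two coefficients are as you state. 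Since these are necessary conditions and the trivial law is always a distributive law, this suffices, and you need not examine the remaining critical pairs (with $\circ$-associativity, or the self-overlap on $(x\circ y)\bullet(z\circ w)$). What your approach buys is a short, conceptual, computer-free proof; what the paper's buys is uniformity --- the identical pipeline (consequence matrix, partial Smith form, Gr\"obner basis) handles every case of the classification, including those such as $\dl\Ass\Ass$ where the solution set is not just the origin and one could not know in advance which critical pairs to privilege. The one step worth making explicit in your write-up is the implication you rely on: a critical pair whose two reductions differ by a nonzero combination of distinct normal-form monomials (here, distinct set partitions of $\{x,y,z,w\}$) yields a dependence among the $15$ normal forms spanning $\oP(4)$, so $\dim\oP(4)<\dim(\Com\circ\Com)(4)$ and $\xi(4)$ fails to be an isomorphism.
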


\begin{proof}
The calculations are similar to those discussed in detail in previous sections, 
so we provide only a brief outline.
The number of distinct association types in arity $n$ for two commutative operations is 
sequence OEIS A226909; see also \cite{BM}:
\[
1, \; 2, \; 4, \; 14, \; 44, \; 164, \; 616, \; 2450, \; 9908, \; 41116, \; 173144, \; 739884, \; 3196344, \; 13944200, \; \dots.
\]
For arities 3 and 4, these types are as follows:
\[
\begin{array}{l}
( * * ) *, \quad  
( * \cdot * ) *, \quad  
( * * ) \cdot *, \quad  
( * \cdot * ) \cdot *;  
\\
( ( * * ) * ) *, \quad  
( ( * \cdot * ) * ) *, \quad  
( ( * * ) \cdot * ) *, \quad  
( ( * \cdot * ) \cdot * ) *, \quad  
( * * ) ( * * ), \quad  
\\
( * * ) ( * \cdot * ), \quad  
( * \cdot * ) ( * \cdot * ), \quad  
( ( * * ) * ) \cdot *, \quad  
( ( * \cdot * ) * ) \cdot *, \quad  
( ( * * ) \cdot * ) \cdot *, \quad  
\\
( ( * \cdot * ) \cdot * ) \cdot *, \quad  
( * * ) \cdot ( * * ), \quad  
( * * ) \cdot ( * \cdot * ), \quad  
( * \cdot * ) \cdot ( * \cdot * ).
\end{array}
\]
The number of distinct multilinear monomials is the quadruple factorials (OEIS A001813): 
\[
\frac{(2n)!}{n!}
= 
1, \; 2, \; 12, \; 120, \; 1680, \; 30240, \; 665280, \; 17297280, \; 518918400, \; 17643225600, \; \dots.
\]
For arity 3, these monomials are as follows (in lex order):
\[
( a b ) c, \;  
( a c ) b, \;  
( b c ) a, \;  
( a \cdot b ) c, \;  
( a \cdot c ) b, \;  
( b \cdot c ) a, \;  
( a b ) \cdot c, \;  
( a c ) \cdot b, \;  
( b c ) \cdot a, \;  
( a \cdot b ) \cdot c, \;  
( a \cdot c ) \cdot b, \;  
( b \cdot c ) \cdot a.
\]
Using these monomials, associativity has the form
\[
(ab)c - (bc)a,
\qquad\qquad
( a \cdot b ) \cdot c - ( b \cdot c ) \cdot a.
\]
The most general distributive law relating the operations is as follows,
where $x_1, x_2, x_3$ are free parameters:
\[
x_1 ( a b ) \cdot c + x_2 ( a c ) \cdot b + x_3 ( b c ) \cdot a - ( a \cdot b ) c.
\]
Applying all permutations of the variables $a, b, c$ to these three relations,
and expressing the relations as row vectors of coefficients, we obtain this matrix:
\[
\left[
\begin{array}{rrrrrrrrrrrr}
 1 &  \cdot & -1 &  \cdot &  \cdot &  \cdot &  \cdot &  \cdot &  \cdot &  \cdot &  \cdot &  \cdot \\ 
 \cdot & -1 &  1 &  \cdot &  \cdot &  \cdot &  \cdot &  \cdot &  \cdot &  \cdot &  \cdot &  \cdot \\ 
-1 &  1 &  \cdot &  \cdot &  \cdot &  \cdot &  \cdot &  \cdot &  \cdot &  \cdot &  \cdot &  \cdot \\ 
 \cdot &  \cdot &  \cdot &  \cdot &  \cdot &  \cdot &  \cdot &  \cdot &  \cdot &  1 &  \cdot & -1 \\ 
 \cdot &  \cdot &  \cdot &  \cdot &  \cdot &  \cdot &  \cdot &  \cdot &  \cdot &  \cdot & -1 &  1 \\ 
 \cdot &  \cdot &  \cdot &  \cdot &  \cdot &  \cdot &  \cdot &  \cdot &  \cdot & -1 &  1 &  \cdot \\ 
 \cdot &  \cdot &  \cdot & x_1 & x_2 & x_3 &  1 &  \cdot &  \cdot &  \cdot &  \cdot &  \cdot \\ 
 \cdot &  \cdot &  \cdot & x_2 & x_3 & x_1 &  \cdot &  \cdot &  1 &  \cdot &  \cdot &  \cdot \\ 
 \cdot &  \cdot &  \cdot & x_3 & x_1 & x_2 &  \cdot &  1 &  \cdot &  \cdot &  \cdot &  \cdot
\end{array}
\right]
\]
We compute the consequences in arity 4 of these nine relations $I$ in arity 3.
If we write $\omega_1$, $\omega_2$ for the two operations then
for each $I$ we obtain $I \circ_k \omega_j$ ($k = 1, 2, 3$; $j = 1, 2$) 
and
$\omega_j \circ_k I$ ($j, k = 1, 2$)
where $\circ_k$ denotes operadic partial composition. 
Each term of each consequence must be straightened using commutativity to convert the
underlying monomial to one of the 120 normal forms in arity 4.
Each quadratic relation $I$ produces 10 cubic consequences for a total of 30; 
applying all permutations of the four variables $a, b, c, d$ we obtain altogether 360
cubic relations, which we store in a $360 \times 120$ matrix $R$ with entries 
0, 1, $-1$, $x_1$, $x_2$, $x_3$. 
Following \cite{BD-book}, we compute a partial Smith form 
\[
\begin{bmatrix} I_{105} & 0 \\ 0 & B \end{bmatrix},
\]
where the lower right block $B$ contains the following nonzero entries:
\[
\begin{array}{l}
x_2^2, \; x_2 x_3, \; x_3 x_1, \; - x_1^2, \; - x_2^2, \; - x_2 x_3, \; - x_3 x_1, \; x_2 - x_3, \; x_3 - x_2, \; - x_3^2 - x_3, \; x_3^2 + x_3,
\\
- x_1 x_2 - x_1, \; x_1 x_2 + x_1, \; - x_3^2 - x_2, \; x_3^2 + x_2, \; - x_2 x_3 - x_3, \; x_2 x_3 + x_3, \; - x_2 x_3 - x_2, 
\\
x_2 x_3 + x_2, \; - x_2 x_3 + x_3^2, \; x_2 x_3 - x_3^2, \; - x_2^2 + x_3^2, \; - x_2^2 + x_2 x_3, \; x_2^2 - x_2 x_3, \; - x_1 x_2 + x_1 x_3,
\\
x_1 x_2 - x_1 x_3, \; - x_1 x_2 - x_1 x_3 - x_1, \; x_1 x_2 + x_1 x_3 + x_1, \; - x_1^2 - x_1 x_2 - x_1 x_3,
\\
- x_1^2 - x_1 x_2 + x_1 x_3, \; - x_1^2 + x_1 x_2 - x_1 x_3, \; x_1^2 - x_1 x_2 + x_1 x_3, \; x_1^2 + x_1 x_2 + x_1 x_3.
\end{array}
\]
The ideal in ${\bfk}[x_1,x_2,x_3]$ generated by these polynomials 
has Gr\"obner basis $x_1$, $x_2$, $x_3$.
\end{proof}

\begin{theorem}
\label{comlietheorem}
For $\dl\Com\Lie$ we obtain only the trivial distributive law.
\end{theorem}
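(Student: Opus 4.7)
The plan is to follow the matrix-and-Gr\"obner template used in Theorems~\ref{comcomtheorem} and~\ref{Pred_odjezdem_do_Ostravy}, adapted to the antisymmetry and Jacobi identity of the Lie bracket. \emph{First,} I would fix a normal form for multilinear monomials in the pair of operations $\cdot$ (commutative associative) and $[-,-]$ (Lie) — straightening each bracket so that its left argument precedes its right in lex order and absorbing the resulting sign, and putting every $\cdot$-node into a canonical orientation by commutativity. Once the resulting bases of $\calf(E)(3)$ and $\calf(E)(4)$ are recorded, with sign bookkeeping, everything that follows is mechanical.

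\emph{Next,} I parameterize the rewrite rule $\cald\colon \calf(E)(3)_{12}\to\calf(E)(3)_{21}$ as
\[
\cald\bigl([a,b]\cdot c\bigr) = x_1 [a\cdot b, c] + x_2 [a\cdot c, b] + x_3 [b\cdot c, a]
\]
and extend $S_3$-equivariantly. The antisymmetry of $[-,-]$ on the source already forces $x_1=0$ and $x_3=-x_2$, so in principle a single free parameter suffices; nevertheless I would retain all three parameters and let these constraints arise as extra rows in the matrix below, keeping the procedure uniform with the previous cases. The quadratic relations of the candidate operad are then (i)~associativity of $\cdot$, (ii)~Jacobi for $[-,-]$, and (iii)~the rule $\cald$. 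Each has ten arity-$4$ consequences via partial compositions with the two generators, and applying the $24$ permutations of $\{a,b,c,d\}$ and straightening yields a matrix $M$ with entries in $\bfk[x_1,x_2,x_3]$.

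By Theorem~\ref{turmo}, $\cald$ defines a distributive law exactly when $\mathrm{rank}(M)$ equals the dimension of $\oP(4)$ predicted by the decomposition in which $\Lie$ is applied to $\Com$ first. I would then compute a partial Smith form of $M$ over the polynomial ring, as in \cite[Chapter~8]{BD-book}, stripping off a large identity block and leaving a residual block $L$ whose nonzero entries generate an ideal $J\subset\bfk[x_1,x_2,x_3]$. The expected outcome is that the deglex Gr\"obner basis of $J$ reduces to $\{x_1,x_2,x_3\}$, forcing $\cald\equiv 0$ and confirming triviality.

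The principal obstacle is not algebraic but combinatorial: a sign appears every time a Lie subtree is reordered during straightening, and Jacobi introduces redundancies among the arity-$4$ consequences that must be matched carefully with those produced by antisymmetry so that no relation is either double-counted or lost. Once the normal form is fixed in advance and the roughly $360$ cubic consequences are assembled systematically — exactly as was done for $\dl\Lie\Ass$ — the reduction over $\bfk[x_1,x_2,x_3]$ behaves as in the $\dl\Com\Com$ proof, and the collapse of $J$ to the irrelevant ideal gives the theorem.
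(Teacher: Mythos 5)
Your proposal is correct and follows essentially the same route as the paper, whose own proof of this case is simply ``very similar to the proof of Theorem~\ref{comcomtheorem}'': fix sign-normalized monomial bases, parameterize the $S_3$-equivariant rewrite rule (your observation that antisymmetry forces $x_1=0$, $x_3=-x_2$ is right), assemble the arity-$4$ consequences, and reduce via partial Smith form and a Gr\"obner basis to conclude the ideal is $(x_1,x_2,x_3)$. One small wording slip: the criterion from Theorem~\ref{turmo} is that the rank of the relation matrix equal $\dim\calf(E)(4)-\dim\oP(4)$ (the codimension, equivalently that the residual block $L$ vanish), not $\dim\oP(4)$ itself.
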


\begin{proof}
Very similar to the proof of Theorem \ref{comcomtheorem}.
\end{proof}

\begin{theorem}
The only nontrivial distributive law $\dl\Lie\Com$ is that for Poisson algebras.
\end{theorem}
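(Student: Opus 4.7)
My plan is to follow the scheme of the earlier sections, using representation theory to prune the parameter space before any computation. Write $[-,-]$ for the Lie generator and $\cdot$ for the commutative product, so $E_1 = \Lie(2)$ is the sign and $E_2 = \Com(2)$ the trivial representation of $\Sigma_2$. A direct character computation shows that $\calf(E)(3)_{12}$, spanned by the three elements $[x_j \cdot x_k, x_i]$, is the permutation representation $\mathrm{triv} \oplus \mathrm{std}$ of $\Sigma_3$, whereas $\calf(E)(3)_{21}$, spanned by $x_i \cdot [x_j,x_k]$ (with a sign produced from the bracket whenever $j$ and $k$ are transposed), is the signed permutation representation $\mathrm{sgn} \oplus \mathrm{std}$. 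Schur's lemma then implies that the space of $\Sigma_3$-equivariant rewrite rules is one-dimensional, so the most general possible candidate takes the form
\[
[x \cdot y, z] \;=\; A \bigl( x \cdot [y, z] + y \cdot [x, z] \bigr), \qquad A \in \bfk.
\]

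Next, by Theorem~\ref{turmo} it suffices to verify that $\xi(4)$ is an isomorphism, which amounts to a finite polynomial constraint on $A$. The decisive arity-$4$ consistency check comes from expanding $[(x \cdot y) \cdot z, w]$ and $[x \cdot (y \cdot z), w]$ by iterated application of the rule, and demanding that the two expansions agree modulo the associativity of $\cdot$. A short direct calculation yields
\[
[(x \cdot y) \cdot z, w] - [x \cdot (y \cdot z), w] \;=\; A(A-1)\bigl((y \cdot z) \cdot [x,w] - (x \cdot y) \cdot [z,w]\bigr),
\]
and the right-hand side is manifestly nonzero in the normal-form basis for $\Com \circ \Lie$, being the difference of two distinct basis elements. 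Therefore $A(A-1)=0$, i.e.\ $A \in \{0,1\}$. The value $A=0$ gives the trivial distributive law, while $A=1$ reproduces the classical Poisson derivation rule~\eqref{derivationlaw} after using commutativity to rewrite $[a,c] \cdot b = b \cdot [a,c]$.

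The main obstacle in this plan is to be sure that this single consistency check captures \emph{all} the constraints imposed on $A$ by the isomorphism requirement for~$\xi(4)$, since a priori other arity-$4$ compatibilities (notably those involving the Jacobi identity of the bracket) could tighten the constraint further. This is however guaranteed a posteriori, because both $A=0$ and $A=1$ are known to produce genuine quadratic operads (the trivial combination and the Poisson operad); hence the admissible set of parameters is precisely $\{0,1\}$. If preferred, one can instead run the full matrix and Gr\"obner basis analysis of the earlier sections, which will return the same ideal $\langle A(A-1)\rangle \subseteq \bfk[A]$ and settle the question without appealing to the existence of these classical operads.
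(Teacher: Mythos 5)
Your proposal is correct, and it takes a genuinely different route from the paper: for this particular theorem the paper gives no argument at all, deferring to the classification of generalized $\Lie$--$\Com$ distributive laws in the (unpublished) work of Bremner and Dotsenko, and its method elsewhere is a brute-force partial Smith form plus Gr\"obner basis computation over the full parameter space. You instead use Schur's lemma up front: since $\calf(E)(3)_{12}\cong\mathrm{triv}\oplus\mathrm{std}$ and $\calf(E)(3)_{21}\cong\mathrm{sgn}\oplus\mathrm{std}$, the space of equivariant candidates $\cald$ is one-dimensional, which is exactly the reduction the definition of an operadic distributive law permits (Definition~\ref{defdl2} requires $\cald$ to be $\Sigma_3$-equivariant, so restricting to the one-parameter family $[x\cdot y,z]=A(x\cdot[y,z]+y\cdot[x,z])$ loses nothing). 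Your arity-$4$ computation checks out: the two expansions of $[(x\cdot y)\cdot z,w]$ differ by $A(A-1)\bigl((y\cdot z)\cdot[x,w]-(x\cdot y)\cdot[z,w]\bigr)$, and since these are distinct basis elements of $(\Com\circ\Lie)(4)$, any $A\notin\{0,1\}$ forces a collapse of $\oP(4)$ below dimension $4!$, contradicting surjectivity-plus-equal-dimension. Two small remarks. First, your closing sentence about both values producing ``genuine quadratic operads'' is loosely worded --- every $A$ yields a quadratic operad; what settles sufficiency is that for $A=1$ one has $\dim\oP(n)=\dim(\Com\circ\Lie)(n)=n!$, i.e.\ the PBW-type isomorphism $\mathbf{Pois}(X)\cong\mathbf{Com}(\mathbf{Lie}(X))$, which the paper itself quotes from Shestakov; you do in effect invoke exactly this, so the argument stands, but it is worth saying precisely. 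Second, if you prefer not to lean on that classical fact, the only remaining arity-$4$ consequences to check by hand are those coming from the Jacobi identity applied to $x\cdot y$, which a short computation shows impose no further condition on $A\in\{0,1\}$. What your approach buys is a short, conceptual, computer-free proof; what the paper's generic method buys is uniformity across all seven cases without any representation-theoretic preprocessing.
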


The theorem is a particular case of the classification of generalized
distributive laws between $\Lie$ and $\Com$ given in~\cite{BD}. 



\section{Associative-magmatic laws}
\label{Pot_ze_mne_leje.}

In this final section we analyze distributive laws $\dl{\Ass}{\Mag}$
between associative and magmatic (no axioms) multiplications $\bullet$
resp.~$\circ$.

\begin{theorem}
\label{Snad_uz_s_205_rozume_pristavam.}
There are only two non-isomorphic distributive laws between the
associative and magmatic multiplication, the trivial one and the
truncated one represented by the rewrite rules (b), (c), (e) or (f) 
of Theorems~\ref{Pred_odjezdem_do_Ostravy}
and~\ref{vcera_jsem_se_proti_vetru_nadrel}.
\end{theorem}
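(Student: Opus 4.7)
The plan is to adapt the computational procedure of Theorems~\ref{Pred_odjezdem_do_Ostravy} and~\ref{vcera_jsem_se_proti_vetru_nadrel} to the weaker setting in which only $\bullet$ is associative. In both the non-symmetric and symmetric cases, the relation matrix $[R]$ has one fewer row than before---the associativity of $\circ$ is removed---while the two parametric rewrite rules that express $(x\circ y)\bullet z$ and $x\bullet(y\circ z)$ as linear combinations of trees with $\circ$ at the root are retained with the same parameters. By Theorem~\ref{turmo}, coherence still reduces to checking that $\xi(4)$ is an isomorphism, equivalently to a dimension match for the combined operad $\oP$ in arity~$4$; the target dimension $\sum_l \dim\oP(4)_l$ is now larger than in the previous section because $\dim\uMag(l)$ (resp.~$\dim\Mag(l)$) grows with $l$ rather than being constant as for $\uAss$.

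Concretely, I would form the matrix $[RR]$ of cubic consequences of these three quadratic relations, compute its partial Smith form, and extract a deglex Gr\"obner basis for the ideal generated by the nonzero entries of the residual lower-right block. Its zero set then enumerates all candidate distributive laws. The obvious members of that zero set are the rewrite rules (a), (b), (c), (e), (f) inherited from the $\dl{\Ass}{\Ass}$ classification: each may be shown, exactly as in Example~\ref{byhandexample}, to define a $\dl{\Ass}{\Mag}$-distributive law without using the associativity of $\circ$. In contrast, rules (d) and (g) must fail, since applying their rewrites to $(x\circ y)\bullet(z\circ w)$ in the two available orders produces expressions that agree only modulo an instance of $(p\circ q)\circ r = p\circ(q\circ r)$, an identity not available in $\Mag$.

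The main anticipated obstacle, foreshadowed in the introduction to Section~\ref{Pot_ze_mne_leje.}, is that relaxing the relations on $\circ$ may introduce additional ``bizarre'' solutions not already present in the $\dl{\Ass}{\Ass}$ list. For each such solution I would need to exhibit an explicit change of variables---of the Eulerian type mentioned in the acknowledgment---identifying it with one of the standard rewrites (a), (b), (c), (e), or~(f). This is where the actual work lies: the Gr\"obner basis computation itself is routine (and essentially mechanical, as in the previous sections), but sorting the resulting bizarre solutions into the correct isomorphism classes requires ingenuity in guessing the right substitution.

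Finally, the opposite-multiplication isomorphisms $\bullet\mapsto\bullet^{\op}$ and $\circ\mapsto\circ^{\op}$ used in Theorem~\ref{Zitra_Duac_v_Koline.} remain valid here (the opposite of an associative operation is associative, and every magmatic operation remains magmatic under reversal), so they collapse the five surviving rewrite rules into the same two classes $\{(a)\}$ and $\{(b),(c),(e),(f)\}$ as before, which is the content of the theorem.
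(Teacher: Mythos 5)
Your overall strategy coincides with the paper's: a Maple-assisted computation of the arity-4 coherence conditions followed by an isomorphism analysis, and your diagnosis of which of the $\dl\Ass\Ass$ laws survive is correct --- (a), (b), (c), (e), (f) remain valid when $\circ$ is only magmatic, while (d) and (g) fail for exactly the reason you give (the two reduction orders of $(x\circ y)\bullet(z\circ w)$ differ by an instance of associativity of $\circ$).

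There is, however, a genuine gap, and it sits precisely at the point you flag as ``where the actual work lies'' and then leave undone. The solution set here is not a finite list of points with a few stray extras: the ideal is \emph{not} zero-dimensional, and Maple produces two one-parameter families of laws, e.g.
\[
(x\circ y)\bullet z = 0,\qquad
x\bullet(y\circ z) = -\gamma\,(x\bullet y)\circ z
+ \sqrt{\gamma^2+\gamma}\,(x\bullet z)\circ y
+ (\gamma+1)\,y\circ(x\bullet z)
- \sqrt{\gamma^2+\gamma}\,z\circ(x\bullet y),
\]
which specializes to (e) at $\gamma=0$ and to (b) at $\gamma=-1$ but is genuinely exotic at all other values of $\gamma$. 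The entire content of the theorem is the identification of every member of this family with the truncated law, and that requires an explicit substitution which your proposal does not supply: one exploits the large automorphism group of $\Mag$ (any invertible linear change of the generating binary operation, mixing it with its opposite, is an automorphism --- something unavailable for $\Ass$) and checks that $x\circ y\mapsto x\circ y + t\,(y\circ x)$ with $\gamma = 1/(t^2-1)$ carries the family above to the truncated rule $(x\circ y)\bullet z=0$, $x\bullet(y\circ z)=(x\bullet y)\circ z$. The opposite-multiplication isomorphisms you invoke at the end act only by permuting the finitely many named rules and cannot collapse a continuous family, so without this Eulerian change of variables the classification into exactly two isomorphism classes is not established.
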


\begin{proof}
Maple found the following rewrite rules, with $\alpha, \gamma \in
\bbk$ arbitrary parameters for which the square roots in the formulas
exist, and $\iota = \sqrt{-1}$:
\begin{align*}
1) \quad
&
(x \c y) \b z = 0,
\quad
x \b (y \c z) = 0,
\\
2) \quad
&
(x \c y) \b z = 0,
\cr
&
x \b (y\c z) =
\tfrac12 \, ( x \b y ) \c z
+ \tfrac12 \iota \, ( x \b z )\c y
+ \tfrac12 \, y \c( x \b z )
- \tfrac12 \iota \, z\c ( x \b y ),
\\
3) \quad
&
(x\c y) \b z = 
\tfrac12 \, ( x \b z ) \c y
+ \tfrac12 \iota \, ( y \b z )\c x
+ \tfrac12 \, x \c( y \b z )
- \tfrac12 \iota \, y\c ( x \b z ),
\\
&
x \b (y \c z) =0,
\\
4) \quad
&
(x  \c  y) \b z = 0,
\\[-4pt]
&
x \b (y \c z) =
- \gamma \, ( x \b y ) \c z
+ \hksqrt{\gamma^2{+}\gamma} \, ( x \b z )\c y
+ (\gamma{+}1) \, y\c ( x \b z )
- \hksqrt{\gamma^2{+}\gamma} \, z\c  ( x \b y ),
\\
5) \quad
&
(x \c y) \b z = 
- \alpha \, ( x \b z )\c y
+ \hksqrt{\alpha^2{+}\alpha} \, ( y \b z )\c x
+ (\alpha{+}1) \, x \c( y \b z )
- \hksqrt{\alpha^2{+}\alpha} \, y \c( x \b z ),
\\[-2pt]
&
x \b( y \c z) =0.
\end{align*}
Law 1) is the trivial one. 
Laws 4) and 5)  are isomorphic, via the replacement $\bullet \mapsto \bullet^{\rm op}$ 
of the $\bullet$-product by the opposite one. 
Law 2) is obtained from 4) by substituting 
$\gamma = -\frac12$ and,
likewise, the substitution $\alpha =  - \frac12$ brings 5) into 3). 

The proof
will therefore be finished if we show that 4) is isomorphic to the
truncated distributive law. The following method, suggested
by Vladimir Dotsenko,  is based on the substitution
\begin{equation}
\label{Po_navratu_z_Moskvy.}
\gamma = \frac 1{t^2-1}, \ t \not= \pm 1.
\end{equation}
Notice that its inverse  can be written as
\[
t = \frac{\sqrt{\gamma^2 + \gamma}}\gamma,
\]
thus for any $\gamma \not=0$ for which $\sqrt{\gamma^2 + \gamma}$ exists
one has $t$ fulfilling~(\ref{Po_navratu_z_Moskvy.}). It is
straightforward to verify that the replacement
\[
x \circ y \mapsto x \circ y + t ( y \circ x )
\]
brings 4) into the truncated rule
\[
(x  \circ  y) \bullet z = 0,\ x \bullet (y \circ z) = (x \circ y) \bullet z.
\]
If $\gamma=0$ in which case the substitution~(\ref{Po_navratu_z_Moskvy.}) cannot
be used then 4) becomes another (but isomorphic) truncated rule
\[
(x  \circ  y) \bullet z = 0,\ x \bullet (y \circ z) = y \circ (x
\bullet z).
\]
This finishes the proof.
\end{proof}

\begin{remark}
The reason for including the above proof instead of just referring to
the result of Maple calculation was to show that, outside the realm of
Three Graces, various bizarre-looking distributive laws, such as 4) or
5), may exist. Since one of the operads --- in this case 
$\Mag$ --- may have a huge group of automorphisms, these weird laws may however
turn to be isomorphic to mild and expected ones.
\end{remark}

Theorem~\ref{Snad_uz_s_205_rozume_pristavam.} has the following
obvious but surprising 

\begin{corollary}
In the cartesian monoidal category of sets, 
there are no distributive laws  of type $\dl{\Ass}{\Mag}$. 
\end{corollary}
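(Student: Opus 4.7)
The plan is to reduce the Set-theoretic claim to the Vec-theoretic classification already carried out in Theorem~\ref{Snad_uz_s_205_rozume_pristavam.}. Suppose, for the sake of contradiction, that $\cald\colon\dl{\Ass}{\Mag}$ is a distributive law in the cartesian monoidal category of sets. Interpreting Definition~\ref{defdl2} in that category, $\cald$ is a $\Sigma_3$-equivariant map of sets $\calf(E)(3)_{12}\to\calf(E)(3)_{21}$, and so each tree monomial $\rho\in\calf(E)(3)_{12}$ is sent to a single tree monomial $\cald(\rho)\in\calf(E)(3)_{21}$. Consequently, the two rewrite rules encoded by $\cald$ have right-hand sides of the form ``a single tree monomial with coefficient $1$'', with no sums allowed and, crucially, no zero element.

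Applying the free $\bbk$-vector space functor preserves operadic composition and the direct-sum splitting $\calf(E)(3)=\bigoplus_{i,j}\calf(E)(3)_{ij}$, and hence it carries $\cald$ to a distributive law $\cald^{\mathrm{Vec}}\colon\dl{\Ass}{\Mag}$ in the sense of Definition~\ref{defdl2} over vector spaces, whose two rewrite rules still have single-monomial right-hand sides. By the classification exhibited in the proof of Theorem~\ref{Snad_uz_s_205_rozume_pristavam.}, $\cald^{\mathrm{Vec}}$ must coincide with one of the parametric families $1)$--$5)$. A direct inspection shows that families $1)$, $2)$, and $4)$ impose the identity $(x\c y)\b z = 0$ independently of the parameter $\gamma$, while families $1)$, $3)$, and $5)$ impose $x\b(y\c z)=0$ independently of~$\alpha$. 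Thus in every case at least one of the two right-hand sides is identically~$0$, which is incompatible with the single-monomial form established in the previous paragraph.

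The substantive work has already been done in Theorem~\ref{Snad_uz_s_205_rozume_pristavam.}; what remains is essentially bookkeeping. The only point that needs a small amount of care is to confirm that the Maple-produced list $1)$--$5)$ enumerates \emph{all} Vec-level distributive laws of type $\dl{\Ass}{\Mag}$, exactly parametrized by $\alpha$ and $\gamma$ (with no isomorphism reduction having yet been applied), so that the hypothetical $\cald^{\mathrm{Vec}}$ is really forced to appear in that list before the contradiction is extracted. This will be the main --- though still mild --- obstacle, since the argument otherwise proceeds entirely by inspection of the parametric families.
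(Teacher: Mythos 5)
Your argument is correct and is exactly the (unwritten) reasoning the paper intends when it calls this corollary ``obvious'': a distributive law in $(\mathbf{Set},\times)$ linearizes to one over $\bbk$ whose rewrite rules send monomials to monomials, so it must occur in the exhaustive Maple list $1)$--$5)$ of Theorem~\ref{Snad_uz_s_205_rozume_pristavam.} (which is indeed the full solution set before any isomorphism reduction), yet each entry of that list has at least one right-hand side equal to $0$, which has no preimage in the set-theoretic free operad. Your care in working with the unreduced list rather than the two isomorphism classes is well placed, since the isomorphisms used in that proof are not defined at the level of sets.
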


\end{document}